\documentclass[11pt]{article}
\usepackage[margin=1in]{geometry}
\usepackage{graphicx}
\usepackage{amsmath}
\usepackage{amsthm}
\usepackage{physics}
\usepackage{amssymb}
\usepackage{xcolor}
\usepackage{mathtools}
\usepackage{comment}
\usepackage{subcaption}
\usepackage{enumitem}
\usepackage[normalem]{ulem}
\usepackage{tabularx}
\usepackage{booktabs}
\usepackage{rotating}
\usepackage{hyperref}
\usepackage{longtable}
\usepackage[user]{zref}
\usepackage{array}
\usepackage{ragged2e}
\usepackage{authblk}
\usepackage{multirow}

\hypersetup{
  colorlinks,
  linkcolor={red!50!black},
  citecolor={blue!50!black},
  urlcolor={blue!80!black}
}

\theoremstyle{plain}
\newtheorem*{theorem*}{Theorem}
\newtheorem{theorem}{Theorem}[section]
\newtheorem{lemma}[theorem]{Lemma}
\newtheorem{proposition}[theorem]{Proposition}
\newtheorem{corollary}[theorem]{Corollary}

\theoremstyle{definition}

\theoremstyle{remark}

\newtheorem*{claim}{Claim}

\numberwithin{equation}{section}

\usepackage{dsfont}
\newcommand{\indicator}[1]{\mathds{1}_{#1}}
\DeclarePairedDelimiter{\set}{\{}{\}}
\DeclarePairedDelimiter{\mset}{\langle}{\rangle}
\DeclarePairedDelimiter{\cbraces}{\{}{\}}
\DeclarePairedDelimiter{\brackets}{[}{]}

\DeclarePairedDelimiter{\floor}{\lfloor}{\rfloor}

\newcommand{\R}{\mathbb{R}}
\newcommand{\T}{\mathbb{T}}
\newcommand{\Z}{\mathbb{Z}}
\newcommand{\cS}{\mathcal{S}}
\newcommand{\cO}{\mathcal{O}}
\newcommand{\pmatch}[1]{\operatorname{pm}(#1)}
\newcommand{\per}{\operatorname{per}}
\newcommand{\Aut}{\operatorname{Aut}}

\let\emptyset\varnothing
\let\vec\mathbf

\allowdisplaybreaks

\title{A new upper bound on the dimer constant of $\mathbb{Z}^3$}
\author{Qidong He\thanks{Email address: \url{q.he@northeastern.edu}}}
\affil{Department of Mathematics, Northeastern University}
\date{}

\begin{document}
\maketitle

\begin{abstract}
  We prove that the dimer constant of the cubic lattice is bounded above by $0.452130$, improving the previous best upper bound of $0.457547$ obtained by Lundow (2001) using the transfer-matrix method of Ciucu (1998).
  Our construction instead uses diagonal transfer layers and an inequality of Csikv{\'a}ri (2017) to relate the spectral radius of the resulting matrix to the dimer constant, rather than exploiting symmetry special to the rectangular torus.
  We reduce the dimension of the transfer matrix using a compression method similar to Lundow's.
  Finally, we use a variant of the Collatz--Wielandt formula, derived from an asymptotic result of Friedland--Schneider (1980), to obtain a monotone sequence of upper bounds converging to the true spectral radius, each of which can be computed without forming the full matrix.
\end{abstract}

\section{Introduction}

The (simple) cubic lattice is the graph on the vertex set $\Z^3$ in which two vertices are connected by an edge if and only if they are at Euclidean distance one.
Given positive integers $a_1,a_2,a_3$ such that the product $a_1a_2a_3$ is even, let $f(a_1,a_2,a_3)$ denote the number of perfect matchings in the subgraph of $\Z^3$ induced by 
\begin{equation}
  \set{0\le x_i<a_i,\;i=1,2,3}\cap\Z^3.
\end{equation}
In 1966, Hammersley~\cite[Lemma~6]{hammersley1966existence} proved that the limit
\begin{equation}
  \ell_3:=\lim_{\substack{a_1,a_2,a_3\to\infty\\a_1a_2a_3\text{ even}}}\frac{1}{a_1a_2a_3}\log f(a_1,a_2,a_3)
\end{equation}
exists and is independent of the sequence taken.
We call $\ell_3$ the \textbf{dimer constant} of $\Z^3$.

Whereas the analogous constant for the square lattice is known in closed form~\cite{kasteleyn1961statistics,temperley1961dimer}, no such expression is known for the cubic lattice.
In 1966, Nagle~\cite{nagle1966new} obtained the estimate $\ell_3\approx0.446$ by a non-rigorous series expansion.
In 2018, Vanderstraeten--Vanhecke--Verstraete~\cite{vanderstraeten2018residual} reported the estimate $\ell_3\approx 0.44988452$ based on tensor-network methods.
More recently, in 2022, Otsuka--Akutsu~\cite{otsuka2022residual} estimated $\ell_3\approx 0.44715$ using tensor renormalization group techniques.
As of today, the best known rigorous bounds on $\ell_3$ are
\begin{equation}
  \frac{1}{2}\log\frac{5^5}{6^4}+0.001=0.441075\ldots \le \ell_3 \le 0.457547.
\end{equation}
The lower bound\footnote{In a private communication, Csikv\'ari noted to us that the number $0.001$ was not optimal and, according to a computation by GPT which was not human-verified, could be plausibly improved to at least $0.002775$.} was obtained by Csikv\'ari~\cite{csikvari2016matchings} in 2016 and constituted the first improvement since 1983 over the original bound of Minc~\cite{minc1983theory}, which followed from Schrijver's theorem~\cite{schrijver1998counting} (formerly a conjecture of Schrijver--Valiant~\cite{schrijver1980lower}) on the number of perfect matchings in regular bipartite graphs.
The upper bound was obtained by Lundow~\cite{lundow2001compression} in 2001, based on a transfer-matrix strategy originally devised by Ciucu~\cite{ciucu1998improved} in 1998.\footnote{In 2019, Chen--Golin--Yong~\cite{chen2019transfer} claimed that they had obtained a better upper bound on $\ell_3$ than Lundow's using a different method, but stopped short of reporting a numerical value.}
We refer the reader to a survey by Klivans--Saldanha~\cite{Klivans_Saldanha_2026} for recent developments on dimer coverings in dimension $d\ge 3$; see also the recent works of Taggi~\cite{taggi2022uniformly} on monomer-monomer correlations and Chandgotia--Sheffield--Wolfram~\cite{chandgotia2023large} on large deviations.

The main result of this paper is the following.

\begin{theorem}
  \label{thm: main}
  $\ell_3\le 0.452130$.
\end{theorem}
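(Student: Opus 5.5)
The plan follows the strategy sketched in the abstract: an upper bound on $\ell_3$ by the spectral radius of a transfer matrix, which is then itself bounded from above.

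\textbf{Step 1: transfer matrix on diagonal layers.} Slice $\Z^3$ into layers perpendicular to the direction $(1,1,1)$, or along a diagonal of a two-dimensional cross-section. Periodize the cross-section to a finite torus-like graph $L$ with $m$ vertices, chosen so that consecutive diagonal layers are joined by a bipartite pattern of edges. Index states by the subset $S\subseteq L$ of vertices already covered by dimers coming from the previous layer. The entry $T_{S,S'}$ is the number of ways to complete layer $k$: the vertices not in $S$ are covered by dimers inside the layer or by dimers reaching into layer $k+1$, and the latter occupy exactly $S'$. The number of perfect matchings of a finite slab is then an entry or trace of $T^n$. The aim is to relate $\ell_3$ to $\rho(T)$. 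Because the diagonal slicing is not a product structure, Ciucu's symmetry argument is unavailable. Instead, realise the cubic lattice, or a finite quotient of it, as a covering or limit of graphs whose matching counts are controlled by $\rho(T)$. Csikv\'ari's inequality (the number of perfect matchings of a bipartite graph is bounded by that of its $k$-lifts, or the monotonicity of $\frac{1}{v(G)}\log\pmatch{G}$ along lifts/Benjamini--Schramm limits) then gives
\[
\ell_3\le \frac{1}{c\,m}\log\rho(T),
\]
where $c$ is the number of lattice sites per layer step.

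\textbf{Step 2: compression.} The matrix $T$ has dimension $2^m$, which is too large. Following Lundow, reduce it using automorphisms of $L$ (translations and reflections preserving the layer structure) by passing to orbit classes of states. Also discard states that are unreachable, for example those violating parity or bipartite-balance constraints. The resulting matrix $\tilde T$ is nonnegative, has the same Perron root, and has dimension reduced by roughly the order of the symmetry group.

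\textbf{Step 3: certified upper bound on $\rho(\tilde T)$.} The Collatz--Wielandt formula gives $\rho\le\max_i (\tilde T x)_i/x_i$ for any positive vector $x$. Friedland--Schneider asymptotics imply that taking $x=\tilde T^k\mathbf 1$, or a symmetric variant such as $\left((\tilde T^{k+1}\mathbf 1)_i/(\tilde T^k\mathbf 1)_i\right)$ maximised over $i$, yields a monotone decreasing sequence converging to $\rho$. Each term requires only matrix--vector products, and these can be computed on the fly from local transition rules without storing $\tilde T$. Run this for the largest feasible $m$, using exact rational or interval arithmetic, until the bound $\frac{1}{cm}\log(\cdot)\le 0.452130$ is certified.

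\textbf{Main obstacle.} I expect Step 1 to be hardest. One must choose a diagonal cross-section and lift construction such that Csikv\'ari's inequality applies in the correct direction and the normalisation constant is exact. Any slack there would need to be absorbed by a larger width $m$, which quickly becomes infeasible computationally.
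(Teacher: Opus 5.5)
Your plan has the right ingredients (diagonal layers, Csikv\'ari's lift inequality, orbit compression, iterated Collatz--Wielandt). However, Step~1, which carries the entire mathematical content of the bound, is never constructed, and the inequality as you quote it (``bounded by that of its $k$-lifts'') points the wrong way. What is needed is Csikv\'ari's $2$-lift inequality $\pmatch{H}\le\pmatch{G}^2$ for a $2$-lift $H$ of a bipartite $G$, iterated. Lifts have \emph{fewer} perfect matchings per vertex. So the graph whose count the transfer matrix computes must be the \emph{base}, and the graph that dominates a box must be an iterated $2$-lift of it. The missing idea is a quotient that is simultaneously layered and lifts to a rectangular torus. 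The paper takes $\Lambda_{m,n,l}=\langle m(\vec e_1-\vec e_3),n(\vec e_2-\vec e_3),l\vec e_3\rangle$ with $l\ge4$ even. The first two generators lie in the plane $x_1+x_2+x_3=0$, so $\T_{m,n,l}=\Z^3/\Lambda_{m,n,l}$ is a simple bipartite graph made of $l$ cyclically ordered layers $\cong\Z/m\Z\times\Z/n\Z$. Every edge joins consecutive layers with biadjacency $\indicator{D}(\vec v-\vec u)$, $D=\set{(0,0),(1,0),(0,1)}$; there are no ``dimers inside the layer''. Hence $\pmatch{\T_{m,n,l}}=\tr(T_{m,n}^l)$ with $\bra{U}T_{m,n}\ket{V}=\per A_{m,n}[U,V_{m,n}\setminus V]$. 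Doubling one generator gives an index-$2$ sublattice, hence a $2$-lift, and $\Lambda_{2^pm,2^pn,2^p}=\langle 2^pm\vec e_1,2^pn\vec e_2,2^p\vec e_3\rangle$ is rectangular. So $2p$ successive lifts give
\[
f(2^pm,2^pn,2^p)\le\pmatch{\T_{2^pm,2^pn,2^p}}\le\pmatch{\T_{m,n,2^p}}^{4^p}=\tr\bigl(T_{m,n}^{2^p}\bigr)^{4^p}\le\bigl(2^{mn}\rho(T_{m,n})^{2^p}\bigr)^{4^p},
\]
and letting $p\to\infty$ yields $\ell_3\le\frac{1}{mn}\log\rho(T_{m,n})$. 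The normalisation is therefore $1/(mn)$, the number of sites in one layer, with no extra factor $c$.

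Steps~2 and~3 also need repair. The point-group symmetries (permutations of the three axes) do not commute with $T$, because $\varphi$ maps $D$ only to a translate $\vec t_\varphi+D$. What holds instead is $\bra{U}T\ket{V}=\bra{g\cdot U}T\ket{g_\ast\cdot V}$ with $g=(\vec u,\varphi)$ but $g_\ast=(\vec u-\vec t_\varphi,\varphi)$. From this you must check that the orbit partition is still equitable, and then prove $\rho(T)=\rho(\widehat T)$ rather than assert it (e.g.\ via $\norm{T^q}_\infty=\norm{\widehat T^q}_\infty$ and Gelfand's formula). More seriously, your claim that $\max_i(\tilde T^{k+1}\vec 1)_i/(\tilde T^k\vec 1)_i$ decreases to $\rho(\tilde T)$ is false for this matrix. $\tilde T$ maps states of size $i$ to states of size $mn-i$, so it is $2$-periodic; already for $\bigl(\begin{smallmatrix}0&a\\ b&0\end{smallmatrix}\bigr)$ every maximum ratio equals $\max(a,b)>\sqrt{ab}=\rho$ when $a\ne b$. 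The bound remains valid by Collatz--Wielandt, but it can stall above the target. The paper instead iterates on the sector blocks $X^{(i)}Y^{(i)}$ of $\widehat T^2$. These have positive diagonal because $\per A[U,U]\ge1$, so every irreducible diagonal block is primitive and the Friedland--Schneider asymptotics give convergence. The theorem itself is then the exact-arithmetic $(m,n)=(5,5)$ computation with $p=2$.
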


To explain the source of the improvement, we first recall the transfer-matrix construction underlying the previous upper bound.
There, a toroidal box is decomposed into $m\times n$ cross sections parallel to a coordinate plane, with a transfer state recording the vertices in one cross section which are matched into the next.
Counting the number of perfect matchings in a toroidal box in two coordinate directions as in~\cite[Lemma~2.2]{ciucu1998improved}, relying \textit{crucially} on the box's symmetries, bounds $\ell_3$ in terms of the spectral radius of the resulting $2^{mn}\times 2^{mn}$ sparse matrix; since this comparison uses even powers, the method requires both $m$ and $n$ to be \textit{even}.
The computation is then made feasible by compressing the matrix under a suitable symmetry group, allowing Ciucu to treat the case $(m,n)=(4,4)$ and Lundow the case $(m,n)=(4,6)$.
Further upscaling remains computationally prohibitive; see also the discussions in Friedland--Peled~\cite{friedland2005theory} and Lundow--Markstr{\"o}m~\cite{lundow2008exact}.

While our construction is also based on the transfer-matrix method, it differs from Ciucu's in both the setup and the comparison argument.
We take the toroidal cross sections in the diagonal planes $x_1+x_2+x_3=h$, $h\in\Z$, while retaining Ciucu's definition of the transfer states.
In this setup, every edge of the cubic lattice connects vertices in consecutive cross sections, and the entries of the transfer matrix are given by permanents of submatrices of a fixed biadjacency matrix. 
The comparison of the spectral radius of the transfer matrix with $\ell_3$ is instead accomplished using an inequality relating the matching generating function of a bipartite graph to those of its $2$-lifts, yielding an upper bound for all transfer layer dimensions $m,n\ge 2$ without any parity restriction.

Even before exploiting this additional flexibility, our method already produces a better bound at $(m,n)=(4,3)$ than Lundow's.
Our transfer matrix can moreover be compressed by grouping states into orbits under the action of a suitable symmetry group.
The group we identify differs from Ciucu's in that its action on the states does not commute with the transfer matrix; cf.~\cite[Lemma~3.1]{ciucu1998improved}.
Instead, we prove that the orbit partition satisfies \textit{property C} in the sense of Lundow~\cite[Definition~1]{lundow2001compression} and show that the resulting compression preserves the spectral radius without the nonzero spectral gap assumption in~\cite[Theorem~11]{lundow2001compression}.

Finally, we establish an iterated version of the Collatz--Wielandt formula~\cite[Theorem~8.1.26]{horn2012matrix} for nonnegative matrices with positive diagonal; its proof, given in Appendix~\ref{appx: iterated CW}, makes use of the precise entrywise asymptotics for powers of a nonnegative matrix due to Friedland--Schneider~\cite{friedland1980growth}. 
Applied to the compressed transfer matrix, this yields an equivalent family of bounds that can be evaluated without accessing the full matrix, allowing for the use of larger transfer layers at the cost of longer computation time.

\section{A family of upper bounds}
\label{sec: upper bounds}

Let $m,n\ge 2$ be integers.
Write $V_{m,n}:=\Z/m\Z\times\Z/n\Z$, with addition understood component-wise.
Define the matrix $A_{m,n}$, whose rows and columns are indexed by $V_{m,n}$, by
\begin{equation}
  \label{eqn: biadjacency matrix}
  \bra{\vec{u}}A_{m,n}\ket{\vec{v}}
  :=\indicator{\set{(0,0),(1,0),(0,1)}}(\vec{v}-\vec{u}).
\end{equation}
For $R,C\subseteq V_{m,n}$, write $A_{m,n}[R,C]$ for the submatrix of $A_{m,n}$ with row set $R$ and column set $C$.
Define the matrix $T_{m,n}$, whose rows and columns are indexed by the power set of $V_{m,n}$, by
\begin{equation}
  \label{eqn: transfer matrix}
  \bra{U}T_{m,n}\ket{V}
  :=\begin{cases}
    \per A_{m,n}[U,V_{m,n}\setminus V] & \text{if }\abs{U}+\abs{V}=mn\\
    0 & \text{otherwise}
  \end{cases},
\end{equation}
where $\per(\cdot)$ denotes the permanent, with the convention that the permanent of the empty matrix is $1$.
Let $\rho(T_{m,n})$ be the spectral radius of $T_{m,n}$.
The main result of this section is the following family of upper bounds on $\ell_3$; notably, there is no parity restriction on $m$ and $n$.

\begin{theorem}
  \label{thm: bound}
  For all $m,n\ge 2$, $\ell_3\le\frac{1}{mn}\log\rho(T_{m,n})$.
\end{theorem}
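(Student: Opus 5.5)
The plan is to realize the sequence of transfer matrices as the perfect-matching count of a periodic quotient of $\Z^3$ built from diagonal layers. I then bound that count above by $\rho(T_{m,n})$, and transport the bound to $\ell_3$ using Csikv\'ari's $2$-lift inequality.

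\textbf{Step 1: the layered torus and its perfect matchings.} For $N\ge 2$, let $G_N$ be the graph on $V_{m,n}\times\Z/N\Z$ in which $(\vec u,h)$ is adjacent to $(\vec v,h+1)$ exactly when $\bra{\vec u}A_{m,n}\ket{\vec v}=1$. The map $x\mapsto\bigl((x_2\bmod m,\,x_3\bmod n),\,(x_1+x_2+x_3)\bmod N\bigr)$ sends $e_1,e_2,e_3$ to the offsets $(0,0),(1,0),(0,1)$ and raises the layer index by one. So $G_N\cong\Z^3/L_N$ with
\[
L_N=\set{x: x_2\equiv 0 \ (m),\ x_3\equiv 0\ (n),\ x_1+x_2+x_3\equiv 0\ (N)},
\]
and $G_N$ is $6$-regular and bipartite when $N$ is even.

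Every edge joins consecutive layers. Given a perfect matching, let $U_h$ be the set of vertices of layer $h$ matched upward. The vertices of layer $h+1$ matched downward are then $V_{m,n}\setminus U_{h+1}$, and counting them gives $\abs{U_h}+\abs{U_{h+1}}=mn$. The edges between layers $h$ and $h+1$ form a perfect matching of $A_{m,n}[U_h,V_{m,n}\setminus U_{h+1}]$. Hence
\[
\pmatch{G_N}=\operatorname{tr}(T_{m,n}^N)\le 2^{mn}\rho(T_{m,n})^N .
\]

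\textbf{Step 2: lifts.} For $k\ge 0$ the quotient $\Z^3/2^kL_N$ covers $\Z^3/L_N$ through the chain $L_N\supset 2L_N\supset\cdots$, in which each step is a chain of index-$2$ sublattices. Each index-$2$ step is a $2$-lift of bipartite graphs. Csikv\'ari's inequality says the matching generating function of a bipartite graph, squared, dominates that of any $2$-lift; for perfect matchings this gives $\pmatch{H}\le\pmatch{G}^2$. Iterating $3k$ times yields
\[
\frac{\log\pmatch{\Z^3/2^kL_N}}{\abs{\Z^3/2^kL_N}}\le\frac{\log\pmatch{G_N}}{mnN}\le\frac{mn\log 2}{mnN}+\frac{\log\rho(T_{m,n})}{mn}.
\]
Letting $N\to\infty$ (even), it remains to show that $\ell_3\le\liminf_{k}\abs{\Z^3/2^kL_N}^{-1}\log\pmatch{\Z^3/2^kL_N}$ for each fixed $N$.

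\textbf{Step 3 (main obstacle): comparing large skew tori with boxes.} I need a lower bound on $\pmatch{\Z^3/2^kL_N}$ in terms of $f$ on boxes. My first attempt is as follows. Take a fundamental domain of $2^kL_N$ equal to $2^kP$, where $P$ is a fixed parallelepiped fundamental domain of $L_N$. Partition all of $2^kP$ except a boundary shell of relative volume $O(2^{-k}b)$ into disjoint axis-parallel boxes of side $b$ with $b^3$ even. Then match the shell separately.

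The shell must admit at least one perfect matching within itself, so that $\pmatch{\Z^3/2^kL_N}\ge f(b,b,b)^{(1-o(1))\abs{2^kP}/b^3}$. To arrange this, I will choose the box grid so that the shell decomposes into unit-width strips along the $e_1$-direction of even length. If parity issues arise, I will shift a few boxes or use the freedom in choosing $P$. With this in place, letting $k\to\infty$ and then $b\to\infty$ and invoking Hammersley's limit gives $\ell_3\le\frac{1}{mn}\log\rho(T_{m,n})$.

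This shell-matching step is where I expect the difficulty. If it resists, the fallback is to reverse the roles: exhibit a rectangular box $B$ embedded in a lift, together with an injection of box matchings (times a fixed matching of the complement) into perfect matchings of the lift.
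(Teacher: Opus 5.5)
Your Steps 1 and 2 are sound and use the same two ingredients as the paper: the diagonal-layer identity $\pmatch{G_N}=\operatorname{tr}(T_{m,n}^N)$, and Csikv\'ari's $2$-lift inequality iterated along index-$2$ sublattices. One small fix: take $N\ge 4$ even, since for $N=2$ the quotient has parallel edges and is not a simple $6$-regular graph.

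The gap is Step~3. The inequality $\ell_3\le\liminf_k |\Z^3/2^kL_N|^{-1}\log\pmatch{\Z^3/2^kL_N}$ is never proved. Since $L_N$ is in general not rectangular, $2^kP$ is a skew parallelepiped and the leftover shell is a staircase region. Your argument needs this shell to have a perfect matching inside the torus. Nothing in the construction prevents the pieces left between boxes along an $e_1$-cycle from having odd length. ``Shifting a few boxes'' and the unexecuted fallback are not arguments.

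The missing idea is that no shell is needed if the lifts are chosen to be compatible with an axis-parallel grid. The paper fixes the number of layers at $l=2^p$ and doubles only the in-layer periods, $p$ times each. Then $\Lambda_{2^pm,2^pn,2^p}=\langle 2^pm\vec{e}_1,2^pn\vec{e}_2,2^p\vec{e}_3\rangle$ is rectangular, so the $2^pm\times 2^pn\times 2^p$ box graph is a spanning subgraph of $\T_{2^pm,2^pn,2^p}$. This gives $f(2^pm,2^pn,2^p)\le\pmatch{\T_{2^pm,2^pn,2^p}}$. The $2p$ lifts and the trace bound then finish the proof with the single limit $p\to\infty$, which sends both the layer count and the box size to infinity at once.

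Your uniform-scaling setup can also be repaired directly. For $b=2^j$ with $1\le j\le k$, we have $2^kL_N\subseteq 2^k\Z^3\subseteq b\Z^3$. Hence the cubes $y+\{0,\dots,b-1\}^3$, $y\in b\Z^3$, descend injectively and disjointly to an exact partition of $\Z^3/2^kL_N$ into $|\Z^3/2^kL_N|/b^3$ copies of the $b$-cube. This gives $\pmatch{\Z^3/2^kL_N}\ge f(b,b,b)^{|\Z^3/2^kL_N|/b^3}$, with no shell at all. Combined with your Step 2, $\frac{1}{b^3}\log f(b,b,b)\le\frac{\log 2}{N}+\frac{1}{mn}\log\rho(T_{m,n})$. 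Now let $j\to\infty$ using Hammersley's theorem, and then let $N\to\infty$.
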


The rest of this section is devoted to the proof of Theorem~\ref{thm: bound}.

Let $\vec{e}_1,\vec{e}_2,\vec{e}_3$ denote the standard basis vectors of $\Z^3$.
For even $l\ge 4$, define the lattice
\begin{equation}
  \Lambda_{m,n,l}:=\mset[\big]{m(\vec{e}_1-\vec{e}_3),n(\vec{e}_2-\vec{e}_3),l\vec{e}_3}
\end{equation}
and let
\begin{equation}
  \T_{m,n,l}:=\Z^3/\Lambda_{m,n,l}
\end{equation}
be the corresponding quotient graph. 
Thus, the vertices of $\T_{m,n,l}$ are the cosets $[\vec{x}]$, with two cosets
adjacent if and only if they contain adjacent representatives.
Under the assumptions on $m,n,l$, no nonzero difference between two elements of $\set{\vec{0},\pm\vec{e}_1,\pm\vec{e}_2,\pm\vec{e}_3}$ belongs to $\Lambda_{m,n,l}$.
Hence, the six neighbors $[\vec{x}\pm\vec{e}_i]$, $i=1,2,3$, of every vertex $[\vec{x}]$ of $\T_{m,n,l}$ are distinct and none coincides with $[\vec{x}]$, so that $\T_{m,n,l}$ is \textit{simple} and $6$-regular.
Moreover, every element of $\Lambda_{m,n,l}$ has even coordinate sum, so the parity of the coordinate sum descends to the quotient, making $\T_{m,n,l}$ bipartite.
We orient each edge $\set{[\vec{x}],[\vec{x}+\vec{e}_i]}$ of $\T_{m,n,l}$, $i=1,2,3$, from $[\vec{x}]$ to $[\vec{x}+\vec{e}_i]$.

For a finite graph $G$, let $\pmatch{G}$ denote the number of perfect matchings in $G$.

\begin{lemma}
  \label{lem: transfer}
  For all $m,n\ge 2$ and even $l\ge 4$, $\pmatch{\T_{m,n,l}}=\tr(T_{m,n}^l)$.
\end{lemma}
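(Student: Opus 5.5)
We decompose $\T_{m,n,l}$ into the $l$ diagonal layers $L_h:=\set{[\vec{x}]:x_1+x_2+x_3\equiv h \pmod l}$, $h\in\Z/l\Z$. This is well defined because every generator of $\Lambda_{m,n,l}$ has coordinate sum $0$ or $l$. We identify each layer with $V_{m,n}$ and then write a perfect matching as a closed walk of length $l$ in the state space of $T_{m,n}$.

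\emph{Parametrizing the layers.} Every vertex of $\T_{m,n,l}$ has a representative with $x_3=0$ after subtracting a multiple of $\vec{e}_3$; this changes the coordinate sum, so we account for it. Concretely, define $\phi_h:V_{m,n}\to L_h$ by
\[
\phi_h(a,b):=[(a,b,h-a-b)],
\]
taking integer representatives of $a,b$. The map is well defined because replacing $a$ by $a+m$ shifts the point by $m(\vec{e}_1-\vec{e}_3)\in\Lambda_{m,n,l}$, and likewise for $b$. It is a bijection: $L_h$ has $mnl/l=mn$ elements (the index of $\Lambda_{m,n,l}$ is $mnl$), and $\phi_h$ is surjective since any $[\vec{x}]\in L_h$ equals $[(x_1,x_2,x_3)]$ with $x_3\equiv h-x_1-x_2 \pmod l$, and adding a multiple of $l\vec{e}_3$ fixes $x_3$. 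Every edge joins consecutive layers, and its orientation points from $L_h$ to $L_{h+1}$. The out-neighbours of $\phi_h(a,b)$ are $\phi_{h+1}(a+1,b)$, $\phi_{h+1}(a,b+1)$ and $\phi_{h+1}(a,b)$, the last coming from $+\vec{e}_3$. The bipartite graph between $L_h$ and $L_{h+1}$ therefore has biadjacency matrix exactly $A_{m,n}$. Since $l\ge4$, the layers $L_{h-1}$ and $L_{h+1}$ are distinct, so the only edges at $L_h$ are to these two neighbouring layers.

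\emph{Bijection with closed walks.} Given a perfect matching $M$, let $U_h\subseteq V_{m,n}$ be the set of $\vec{u}$ such that $\phi_h(\vec{u})$ is matched forward into $L_{h+1}$. Each vertex of $L_{h+1}$ is matched either backward (into $L_h$) or forward. The backward-matched set is therefore $V_{m,n}\setminus U_{h+1}$, and it has size $\abs{U_h}$, which gives $\abs{U_h}+\abs{U_{h+1}}=mn$. The restriction of $M$ to the edges between $L_h$ and $L_{h+1}$ is a perfect matching between $U_h$ and $V_{m,n}\setminus U_{h+1}$ in the graph with biadjacency matrix $A_{m,n}$. Conversely, any sequence $(U_0,\dots,U_{l-1},U_l=U_0)$ together with a choice of such partial matchings for each $h$ glues to a perfect matching of $\T_{m,n,l}$: each vertex is covered exactly once, either forward or backward. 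The number of choices at step $h$ is $\per A_{m,n}[U_h,V_{m,n}\setminus U_{h+1}]=\bra{U_h}T_{m,n}\ket{U_{h+1}}$, and this is zero automatically when the sizes mismatch. Summing over all closed sequences yields $\tr(T_{m,n}^l)$.

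\emph{Main obstacle.} The delicate point is the first step: checking that the identification $\phi_h$ is consistent across layers. The third neighbour $+\vec{e}_3$ must land on the same label $(a,b)$, and the wraparound of $a$ modulo $m$ must be compatible with the twisted generator $m(\vec{e}_1-\vec{e}_3)$ rather than introducing a shift. This holds because the $x_3$-coordinate is recomputed from $h-a-b$, so the twist is absorbed exactly. Once this is verified, the transfer-matrix counting is routine.
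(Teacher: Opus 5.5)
Your proposal is correct and is essentially the paper's proof. Your $\phi_h$ is exactly $\Phi^{-1}(\cdot,h)$ for the paper's isomorphism $\Phi:\T_{m,n,l}\to V_{m,n}\times\Z/l\Z$, your closed-walk bijection is the paper's transfer-matrix count via the states $S_k(M)$ written out in more detail, and there are two harmless slips: the aside that every vertex has a representative with $x_3=0$ is false in general (e.g.\ $[(0,0,1)]$ when $m=n=2$, $l=4$), and $\abs{L_h}=mnl/l$ presumes equal layer sizes, which follows since surjectivity of every $\phi_h$ gives $\abs{L_h}\le mn$ while $\sum_h\abs{L_h}=mnl$.
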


\begin{proof}
  Consider the discrete diagonal planes $x_1+x_2+x_3=h$ in $\Z^3$, where $h\in\Z$ is interpreted as the \textbf{height} of the plane.
  The first two generators of $\Lambda_{m,n,l}$ preserve the height, while the third generator increases the height by $l$. 
  Hence, taking the quotient by $\Lambda_{m,n,l}$ imposes periodicity within each diagonal plane and identifies diagonal planes whose heights differ by $l$; formally, we have the group isomorphism
  \begin{equation}
    \begin{split}
      \Phi:\T_{m,n,l}=\Z^3/\Lambda_{m,n,l} {}& \to V_{m,n}\times\Z/l\Z
      \\
      [\vec{x}]=[(x_1,x_2,x_3)] {}& \mapsto ((x_1\bmod m,x_2\bmod n),x_1+x_2+x_3\bmod l).
    \end{split}
  \end{equation}
  For $k\in\Z/l\Z$, call $L_k:=\Phi^{-1}(V_{m,n}\times\set{k})$ the diagonal \textbf{layer} of height $k$.
  Observe that if $\Phi([\vec{x}])=(\vec{u},k)$, then
  \begin{equation}
    \Phi([\vec{x}+\vec{e}_1])=(\vec{u}+(1,0),k+1),\quad
    \Phi([\vec{x}+\vec{e}_2])=(\vec{u}+(0,1),k+1),\quad
    \Phi([\vec{x}+\vec{e}_3])=(\vec{u},k+1).
  \end{equation}
  Thus, every oriented edge of $\T_{m,n,l}$ goes from $L_k$ to $L_{k+1}$ for some $k\in\Z/l\Z$, and the biadjacency matrix of the edges between these two layers is exactly $A_{m,n}$.

  Now, given a perfect matching $M$ in $\T_{m,n,l}$, define its \textbf{state} at height $k\in\Z/l\Z$ as
  \begin{equation}
    S_k(M):=\set{\vec{u}\in V_{m,n}\mid\Phi^{-1}(\vec{u},k)\text{ is matched in $M$ along an oriented edge from $L_k$ to $L_{k+1}$}}.
  \end{equation}
  Consequently, given $k\in\Z/l\Z$ and states $U,V\subseteq V_{m,n}$, a perfect matching $M$ in $\T_{m,n,l}$ such that $S_k(M)=U$ and $S_{k+1}(M)=V$ matches the vertices of $L_k$ corresponding to $U$ onto those of $L_{k+1}$ corresponding to $V_{m,n}\setminus V$.
  Such a matching exists only if $\abs{U}+\abs{V}=mn$, and, in that case, the number of these matchings between the two layers is given by $\bra{U}T_{m,n}\ket{V}=\per A_{m,n}[U,V_{m,n}\setminus V]$.
  The proof is complete by application of the transfer matrix method.
\end{proof}

Next, we compare the number of perfect matchings in quotient graphs $\T_{m,n,l}$ of different periods.
Recall that a graph $H$ is a \textbf{$2$-lift} of a graph $G$ if there exists a $2$-to-$1$ map $\pi:V(H)\to V(G)$ which maps the neighborhood of each vertex of $H$ bijectively onto the neighborhood of its image.

\begin{lemma}
  For all $m,n\ge 2$ and even $l\ge 4$, the graphs $\T_{2m,n,l}$, $\T_{m,2n,l}$ and $\T_{m,n,2l}$ are $2$-lifts of $\T_{m,n,l}$.
\end{lemma}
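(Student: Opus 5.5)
The plan is to treat all three cases uniformly through the natural covering map. In each case the larger lattice is a sublattice of index $2$ in $\Lambda_{m,n,l}$:
\begin{equation}
  \Lambda_{2m,n,l}=\mset[\big]{2m(\vec{e}_1-\vec{e}_3),n(\vec{e}_2-\vec{e}_3),l\vec{e}_3}\subseteq\Lambda_{m,n,l},
\end{equation}
and similarly for $\Lambda_{m,2n,l}$ and $\Lambda_{m,n,2l}$. In each case one generator is doubled while the others are kept, so the index is exactly $2$.

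Writing $\Lambda'\subseteq\Lambda:=\Lambda_{m,n,l}$ for the smaller lattice, I define $\pi:\Z^3/\Lambda'\to\Z^3/\Lambda$ by $\vec{x}+\Lambda'\mapsto\vec{x}+\Lambda$. This map is well defined and surjective. Each fibre is a coset of $\Lambda/\Lambda'\cong\Z/2\Z$, so $\pi$ is exactly $2$-to-$1$.

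Next I check the neighborhood condition. Fix a vertex $[\vec{x}]'$ of $\T':=\Z^3/\Lambda'$. Its neighborhood is $\set{[\vec{x}\pm\vec{e}_i]'}$, which consists of six distinct vertices. This holds because the earlier argument applies verbatim to $\T'$: the doubled parameters still satisfy $2m,2n\ge 2$ and $2l\ge 4$ even. The map $\pi$ sends this set onto $\set{[\vec{x}\pm\vec{e}_i]}$, the neighborhood of $[\vec{x}]$ in $\T_{m,n,l}$, which also has six distinct elements. A surjection between two sets of size six is a bijection, so the condition holds.

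One must also confirm that adjacency in each quotient is given exactly by these six cosets. By definition, $[\vec{y}]$ is adjacent to $[\vec{x}]$ iff some representatives are adjacent, i.e.\ $\vec{y}-\vec{x}\in\pm\vec{e}_i+\Lambda$. Hence the neighborhood of $[\vec{x}]$ is precisely $\set{[\vec{x}\pm\vec{e}_i]}$.

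The only mildly delicate point is this distinctness (simplicity) check for the lifted graph. It needs the doubled parameters to satisfy the standing hypotheses, which they do, and so reduces to the observation already made in the text. I therefore expect no real obstacle; the lemma is essentially the statement that a quotient by an index-$2$ sublattice gives a $2$-fold covering graph.
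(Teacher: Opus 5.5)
Your proposal is correct and follows the paper's proof: both observe that the doubled lattice is an index-$2$ sublattice of $\Lambda_{m,n,l}$, so the identity on $\Z^3$ induces a $2$-to-$1$ map that sends each neighborhood bijectively onto the neighborhood of the image. You also justify that bijectivity (six distinct neighbors in both quotients, since the doubled parameters still satisfy the standing hypotheses), which the paper only asserts.
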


\begin{proof}
  We treat $\T_{2m,n,l}$; the other cases are analogous.
  The group $\Lambda_{2m,n,l}$ is an index-$2$ subgroup of $\Lambda_{m,n,l}$, so the identity map on $\Z^3$ induces a $2$-to-$1$ map
  \begin{equation}
    \begin{split}
      \pi:\T_{2m,n,l} {}&\to \T_{m,n,l}
      \\
      [\vec{x}]_{\Lambda_{2m,n,l}} {}&\mapsto [\vec{x}]_{\Lambda_{m,n,l}}
    \end{split}
  \end{equation}
  which maps the neighborhood of each vertex of $\T_{2m,n,l}$ bijectively onto the neighborhood of its image.
  Having constructed the required $2$-to-$1$ map, we conclude that $\T_{2m,n,l}$ is a $2$-lift of $\T_{m,n,l}$.
\end{proof}

For a finite graph $G$, let $m_k(G)$ denote the number of matchings in $G$ with $k$ edges, and define its \textbf{matching generating function} by
\begin{equation}
  M(G,t):=\sum_{k=0}^{\floor{\abs{V(G)}/2}} m_k(G)t^k.
\end{equation}

\begin{theorem}[{Csikv{\'a}ri,~\cite[Lemma~4.2]{csikvari2017lower}}]
  \label{thm: 2-lift}
  If $G$ is a bipartite graph and $H$ is a $2$-lift of $G$, then, for all $t\ge 0$,
  \begin{equation}
    M(H,t)\le M(G,t)^2.
  \end{equation}
\end{theorem}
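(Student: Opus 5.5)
The plan is to compare $H$ with the \emph{trivial} $2$-lift $2G$, meaning two disjoint copies of $G$. Since $M(2G,t)=M(G,t)^2$, it suffices to show $m_k(H)\le m_k(2G)$ for every $k$. Because $t\ge 0$, the inequality $M(H,t)\le M(G,t)^2$ then follows termwise.

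To compare the coefficients, fix the $2$-to-$1$ covering map $\pi:V(H)\to V(G)$. It induces a $2$-to-$1$ map on edges, since each edge $uv$ of $G$ has exactly two lifts in $H$. Push a matching $N$ of $H$ forward to the multiset $\pi(N)$ of edges of $G$. Each vertex $v$ of $G$ has two preimages, each covered at most once by $N$, so every vertex has degree at most $2$ in $\pi(N)$. Hence $\pi(N)$ is a multigraph $F$ on $V(G)$ with maximum degree at most $2$ and exactly $|N|$ edges. Its components are of four kinds: single edges, doubled edges (both lifts of an edge used), paths of length at least $2$, and cycles in $G$.

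The key step is to count, for a fixed such $F$, the number of matchings of $H$ projecting onto $F$, and to show this count is largest for $H=2G$. I treat the components one at a time, since the choices are independent.
\begin{itemize}
\item A doubled edge $uv$ forces both lifts of $uv$ to be used, so it contributes a factor $1$. Using both lifts requires the two lifts to be vertex-disjoint, which they are, since $\pi$ is a local bijection.
\item A path contributes a factor $2$. The lift of its first edge can be chosen in $2$ ways. At each interior vertex $v$ the next edge must use the lift of $v$ not already covered, and by the local bijectivity of $\pi$ this determines its lift uniquely.
\item A cycle contributes the same propagation around the cycle, but on returning to the start the choice is either consistent or not. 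This depends only on the cycle's "voltage" in the $\Z/2$ signing describing $H$, so the factor is $2$ if the cycle lifts to two closed cycles and $0$ otherwise.
\end{itemize}
In $2G$ every cycle lifts to two closed cycles, so every cycle gets factor $2$. Summing over all $F$ with $k$ edges then gives $m_k(H)\le m_k(2G)$.

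The main obstacle is making the propagation argument rigorous at degree-$2$ vertices and for cycles. The argument must show that the lift of the next edge is forced, and that the parity of the cycle's signing exactly decides whether the closing condition holds. I would formalize this by writing $H$ as the lift associated with a signing $s:E(G)\to\{\pm1\}$: vertex set $V(G)\times\{0,1\}$, with $uv$ lifting to $(u,i)(v,i)$ if $s(uv)=+1$ and to $(u,i)(v,1-i)$ if $s(uv)=-1$. Every $2$-lift is isomorphic to one of this form, and in this form the count becomes an explicit parity check. Bipartiteness of $G$ does not seem to be needed in this argument beyond being the setting in which the theorem is stated.
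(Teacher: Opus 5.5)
Your overall strategy is essentially the argument behind Csikv\'ari's lemma, which the paper cites without proof: project a matching of $H$ to a multigraph $F$ on $V(G)$ of maximum degree at most $2$, count preimages component by component, and compare with the trivial lift $2G$. Your treatment of doubled edges and paths is fine. The cycle step, however, is wrong as stated, and it is precisely where bipartiteness enters, contrary to your closing remark.

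Run your propagation with the signing written additively, $s:E(G)\to\Z/2\Z$, along a cycle $C=v_0v_1\cdots v_{L-1}v_0$. Start from the lift $(v_0,i)(v_1,i+s(v_0v_1))$. At each of the $L-1$ intermediate vertices you switch to the uncovered sheet (adding $1$) and then cross the next edge (adding its sign). So the last lift ends at $(v_0,\,i+(L-1)+\sum_{e\in C}s(e))$, which must equal $(v_0,i+1)$. The closing condition is therefore $\sum_{e\in C}s(e)\equiv L\pmod 2$, not $\sum_{e\in C}s(e)\equiv 0$. For even $L$ this coincides with ``$C$ lifts to two closed cycles'', which gives the factor $2$ or $0$ in $H$ versus $2$ in $2G$, as you want. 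For odd $L$ it is reversed: an odd cycle contributes $0$ in $2G$ but can contribute $2$ in $H$.

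This is not a technicality, because for non-bipartite $G$ the statement is false. The $6$-cycle is a $2$-lift of $K_3$ via $j\mapsto j\bmod 3$, and $M(C_6,t)=1+6t+9t^2+2t^3>(1+3t)^2=M(K_3,t)^2$ for every $t>0$. The excess $2t^3$ comes exactly from the two perfect matchings of $C_6$, each of which projects onto the odd cycle $K_3$.

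So at the cycle step you must use that every cycle of a bipartite graph is even. With that correction, each cycle component of $F$ has $0$ or $2$ preimages in $H$ and exactly $2$ in $2G$, and the other components have equal counts. Your comparison $m_k(H)\le m_k(2G)$ then goes through, and hence $M(H,t)\le M(G,t)^2$ for $t\ge 0$.
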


This result has been extended by Lelarge~\cite[Proposition~8]{lelarge2015counting} to the case of arbitrary positive weights on the edges; see also Barvinok~\cite[Theorem~5.3.3]{barvinok2016combinatorics}.
Here, we only need its specialization to perfect matchings:

\begin{corollary}
  \label{cor: 2-lift}
  If $G$ is a bipartite graph and $H$ is a $2$-lift of $G$, then $\pmatch{H}\le\pmatch{G}^2$.
\end{corollary}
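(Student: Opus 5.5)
The plan is to extract the perfect matching count from Theorem~\ref{thm: 2-lift} by looking at the highest-order coefficient of the two polynomials. Since $H$ is a $2$-lift of $G$, the covering map $\pi$ is $2$-to-$1$, so $\abs{V(H)}=2\abs{V(G)}$. Set $N:=\abs{V(G)}$. If $N$ is odd, then $\pmatch{G}=0$. In that case I will argue separately that $\pmatch{H}=0$ as well: $H$ is bipartite because a lift of a bipartite graph inherits the pulled-back bipartition, and the two colour classes of $G$ have unequal sizes whenever $N$ is odd. Their preimages in $H$ then have sizes differing by twice that nonzero amount, so $H$ has no perfect matching. Hence I may assume $N$ is even.

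Write $N=2k$. Then $M(G,t)$ is a polynomial of degree at most $k$, whose coefficient of $t^k$ is $m_k(G)=\pmatch{G}$. Likewise $M(H,t)$ has degree at most $2k$, and its coefficient of $t^{2k}$ is $\pmatch{H}$. Consequently $M(G,t)^2$ has degree at most $2k$, with top coefficient $\pmatch{G}^2$.

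Next I divide the inequality $M(H,t)\le M(G,t)^2$ by $t^{2k}$ for $t>0$ and let $t\to\infty$. The left side tends to $\pmatch{H}$ and the right side tends to $\pmatch{G}^2$, because all lower-order terms are $O(1/t)$ after the division. Passing to the limit preserves the non-strict inequality, which gives $\pmatch{H}\le\pmatch{G}^2$.

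The only point needing care is the degree and parity bookkeeping: confirming that the top coefficients are exactly the perfect matching counts, and handling odd $N$. Neither is a real obstacle; the limiting argument itself is routine.
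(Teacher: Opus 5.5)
Your proof is correct and follows the paper's argument: divide Csikv\'ari's inequality by $t^{\abs{V(G)}}=t^{\abs{V(H)}/2}$ and let $t\to\infty$ to compare top coefficients. Your separate treatment of odd $\abs{V(G)}$ via the pulled-back bipartition is valid but unnecessary, since in that case $\deg M(G,t)^2\le\abs{V(G)}-1$ and the same limit directly gives $\pmatch{H}\le 0=\pmatch{G}^2$.
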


\begin{proof}
  Using $\abs{V(H)}=2\abs{V(G)}$ and Theorem~\ref{thm: 2-lift}, we have
  \begin{equation}
    \pmatch{H}
    =\lim_{t\to\infty}\frac{M(H,t)}{t^{\abs{V(H)}/2}}
    =\lim_{t\to\infty}\frac{M(H,t)}{t^{\abs{V(G)}}}
    \le \lim_{t\to\infty}\frac{M(G,t)^2}{t^{\abs{V(G)}}}
    =\pmatch{G}^2,
  \end{equation}
  as required.
\end{proof}

We are now ready to deduce Theorem~\ref{thm: bound}.

\begin{proof}[Proof of Theorem~\ref{thm: bound}]
  Let $p\ge 2$ be an integer.
  Observe that the lattice
  \begin{equation}
    \Lambda_{2^{p}m,2^{p}n,2^{p}}
    =\mset[\big]{2^{p}m(\vec{e}_1-\vec{e}_3),2^{p}n(\vec{e}_2-\vec{e}_3),2^{p}\vec{e}_3}
    =\mset[\big]{2^{p}m\vec{e}_1,2^{p}n\vec{e}_2,2^{p}\vec{e}_3}
  \end{equation}
  is rectangular, so that
  \begin{equation}
    \label{eqn: rectangular}
    f(2^p m,2^p n,2^p)\le \pmatch{\T_{2^p m,2^p n,2^p}}.
  \end{equation}
  We now bound
  \begin{align}
    \frac{1}{2^{3p}mn}\log f(2^p m,2^p n,2^p)
    {}&\le \frac{1}{2^{3p}mn}\log\pmatch{\T_{2^p m,2^p n,2^p}}
    {}&& \text{by~\eqref{eqn: rectangular}}
    \nonumber
    \\
    {}&\le \frac{1}{2^{p}mn}\log\pmatch{\T_{m,n,2^p}}
    {}&& \text{by Corollary~\ref{cor: 2-lift}}
    \nonumber
    \\
    {}&= \frac{1}{2^{p}mn}\log\tr(T_{m,n}^{2^p})
    {}&& \text{by Lemma~\ref{lem: transfer}}
    \nonumber
    \\
    {}&\le \frac{1}{2^{p}mn}\log \brackets[\Big]{2^{mn}\rho(T_{m,n})^{2^p}},
    {}&& 
  \end{align}
  having used that the trace of a square matrix equals the sum of its eigenvalues with their algebraic multiplicities~\cite[pp.~50--51]{horn2012matrix} in the final inequality.
  Letting $p\to\infty$ completes the proof.
\end{proof}

\section{Computational reductions}

For $m,n\ge 2$, the transfer matrix $T_{m,n}$ has $2^{2mn}$ entries. 
While most of them vanish, even storing only the nonzero ones becomes prohibitive with larger $m,n$.
It would therefore be desirable to compute $\rho(T_{m,n})$ by considering only a smaller matrix and, even still, without storing any matrix at all.
In this section, we explain how to achieve these reductions.

\subsection{Compression by symmetry}

Write $D:=\set{(0,0),(1,0),(0,1)}\subset V_{m,n}$ (see~\eqref{eqn: biadjacency matrix}).
Let 
\begin{equation}
  \label{eqn: point group}
  K_{m,n} := \set{\varphi\in\Aut(V_{m,n})\mid \text{there exists }\vec{t}_\varphi\in V_{m,n}\text{ such that }\varphi(D)=\vec{t}_\varphi+D};
\end{equation}
thus, up to a possible translation, each transformation in $K_{m,n}$ permutes the elements of $D$.
Let
\begin{equation}
  G_{m,n}:=V_{m,n}\rtimes K_{m,n},
\end{equation}
where $K_{m,n}$ acts on $V_{m,n}$ in the natural way.
Let $G_{m,n}$ act on $V_{m,n}$ via
\begin{equation}
  \label{eqn: group action}
  (\vec{u},\varphi)\cdot\vec{v}:=\vec{u}+\varphi(\vec{v})
\end{equation}
and hence on its subsets.

\begin{proposition}
  \label{prop: property C}
  The partition of $2^{V_{m,n}}$ into orbits under the action of $G_{m,n}$ has \emph{property C} in the sense of Lundow~\cite[Definition~1]{lundow2001compression}:
  if $\cO_1,\cO_2\in 2^{V_{m,n}}/G_{m,n}$ and $U_1,U_2\in\cO_1$, then
  \begin{equation}
    \label{eqn: property C}
    \sum_{V\in\cO_2}\bra{U_1}T_{m,n}\ket{V}
    =\sum_{V\in\cO_2}\bra{U_2}T_{m,n}\ket{V}.
  \end{equation}
\end{proposition}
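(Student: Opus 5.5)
The strategy is to show that each generator of $G_{m,n}$ acts on the rows and columns of $A_{m,n}$ by a \emph{pair} of group elements $(g_1,g_2)$ that may differ, with $\bra{g_1\vec{u}}A_{m,n}\ket{g_2\vec{v}}=\bra{\vec{u}}A_{m,n}\ket{\vec{v}}$. This is enough for \eqref{eqn: property C}, because summing over an entire orbit $\cO_2$ absorbs whichever element acts on the column side. It also explains why the action need not commute with $T_{m,n}$: $g_1$ and $g_2$ need not coincide.

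First, I record how $T_{m,n}$ transforms under such pairs. Suppose $g_1,g_2$ are bijections of $V_{m,n}$ with $\bra{g_1\vec{u}}A_{m,n}\ket{g_2\vec{v}}=\bra{\vec{u}}A_{m,n}\ket{\vec{v}}$ for all $\vec{u},\vec{v}$. Then for $R,C\subseteq V_{m,n}$ the matrix $A_{m,n}[g_1R,g_2C]$ is $A_{m,n}[R,C]$ with rows and columns relabeled, so the permanents agree. Since $g_2$ is a bijection, $g_2(V_{m,n}\setminus V)=V_{m,n}\setminus g_2V$, and the size condition $\abs{U}+\abs{V}=mn$ is preserved. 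Hence
\begin{equation*}
  \bra{g_1U}T_{m,n}\ket{g_2V}=\bra{U}T_{m,n}\ket{V}\quad\text{for all }U,V\subseteq V_{m,n}.
\end{equation*}

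Second, I exhibit such pairs for the generators of $G_{m,n}$.
\begin{itemize}
  \item For a translation by $\vec{s}$, take $g_1=g_2=(\vec{s},\mathrm{id})$. The entry \eqref{eqn: biadjacency matrix} depends only on $\vec{v}-\vec{u}$, so the identity holds.
  \item For $\varphi\in K_{m,n}$ with $\varphi(D)=\vec{t}_\varphi+D$, take $g_1=(\vec{0},\varphi)$ and $g_2=(-\vec{t}_\varphi,\varphi)$, both in $G_{m,n}$. Then
  \begin{equation*}
    \bra{\varphi\vec{u}}A_{m,n}\ket{\varphi\vec{v}-\vec{t}_\varphi}=\indicator{D}\bigl(\varphi(\vec{v}-\vec{u})-\vec{t}_\varphi\bigr).
  \end{equation*}
  This equals $1$ iff $\varphi(\vec{v}-\vec{u})\in\vec{t}_\varphi+D=\varphi(D)$, iff $\vec{v}-\vec{u}\in D$, since $\varphi$ is an automorphism.
\end{itemize}
Every $g=(\vec{u},\varphi)\in G_{m,n}$ is the composition of $(\vec{0},\varphi)$ followed by the translation by $\vec{u}$. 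Composing the two pairs gives, for each $g\in G_{m,n}$, some $g'\in G_{m,n}$ with $\bra{gU}T_{m,n}\ket{g'V}=\bra{U}T_{m,n}\ket{V}$.

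Finally, let $U_2=gU_1$ with $g\in G_{m,n}$. Because $\cO_2$ is a $G_{m,n}$-orbit, $V\mapsto g'V$ is a bijection of $\cO_2$. Reindexing the sum therefore gives
\begin{equation*}
  \sum_{V\in\cO_2}\bra{gU_1}T_{m,n}\ket{V}=\sum_{V\in\cO_2}\bra{gU_1}T_{m,n}\ket{g'V}=\sum_{V\in\cO_2}\bra{U_1}T_{m,n}\ket{V},
\end{equation*}
which is \eqref{eqn: property C}. The only step needing care is the second one. Acting by $\varphi$ alone on both sides fails in general, because $\varphi$ maps $D$ to a \emph{translate} of $D$. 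The fix is to absorb $\vec{t}_\varphi$ into the column action, which is legitimate precisely because translations lie in $G_{m,n}$.
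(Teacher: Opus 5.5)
Your proposal is correct and matches the paper's proof: composing your pairs gives $g'=(\vec{u}-\vec{t}_\varphi,\varphi)$, which is exactly the paper's $g_\ast$, and the paper likewise proves $\bra{U}T_{m,n}\ket{V}=\bra{g\cdot U}T_{m,n}\ket{g_\ast\cdot V}$ by relabelling the permanent and then reindexes the sum over the orbit $\cO_2$. The only cosmetic difference is that you check the invariance of $A_{m,n}$ separately for translations and for $(\vec{0},\varphi)$ and then compose, whereas the paper checks it directly for a general $g=(\vec{w},\varphi)$ in a single computation.
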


\begin{proof}
  For $g=(\vec{u},\varphi)\in G_{m,n}$, define $g_\ast:=(\vec{u}-\vec{t}_\varphi,\varphi)\in G_{m,n}$.
  Let $U,V\subseteq V_{m,n}$ be such that $\abs{U}+\abs{V}=mn$.
  For $g\in G_{m,n}$ and a bijection $\sigma:U\to V_{m,n}\setminus V$, define $\sigma_g:=g_\ast\circ\sigma\circ g^{-1}$.
  Thus, $\sigma_g$ is a bijection from $g\cdot U$ to $g_\ast\cdot(V_{m,n}\setminus V)$; moreover, the mapping $\sigma\mapsto\sigma_g$ is itself a bijection, with inverse $\tau\mapsto g_\ast^{-1}\circ\tau\circ g$.
  
  \begin{claim}
    For all $U,V\subseteq V_{m,n}$ and $g\in G_{m,n}$, $\bra{U}T_{m,n}\ket{V}=\bra{g\cdot U}T_{m,n}\ket{g_\ast\cdot V}$.
  \end{claim}

  \begin{proof}
    If $\abs{U}+\abs{V}\ne mn$, then $\bra{U}T_{m,n}\ket{V}=\bra{g\cdot U}T_{m,n}\ket{g_\ast\cdot V}=0$ by~\eqref{eqn: transfer matrix}.
    Otherwise, write $g=(\vec{w},\varphi)$, and observe that, for $\vec{u}\in U$ and a bijection $\sigma:U\to V_{m,n}\setminus V$,
    \begin{align}
      \label{eqn: matrix element under group action}
      \bra{g\cdot\vec{u}}A_{m,n}\ket{g_\ast\cdot\sigma(\vec{u})}
      {}&=\indicator{D}\big(g_\ast\cdot\sigma(\vec{u})-g\cdot\vec{u}\big)
      {}&&\text{by~\eqref{eqn: biadjacency matrix}}
      \nonumber
      \\
      {}&=\indicator{D}\big(\varphi(\sigma(\vec{u})-\vec{u})-\vec{t}_\varphi\big)
      {}&&\text{by~\eqref{eqn: group action}}
      \nonumber
      \\
      {}&=\indicator{\varphi(D)}\big(\varphi(\sigma(\vec{u})-\vec{u})\big)
      {}&&\text{by~\eqref{eqn: point group}}
      \nonumber
      \\
      {}&=\indicator{D}\big(\sigma(\vec{u})-\vec{u}\big)
      {}&&\text{since $\varphi$ is a bijection}
      \nonumber
      \\
      {}&=\bra{\vec{u}}A_{m,n}\ket{\sigma(\vec{u})}.
      {}&&
    \end{align}
    Therefore,
    \begin{align}
      \bra{U}T_{m,n}\ket{V}
      {}&=\per A_{m,n}[U,V_{m,n}\setminus V]
      {}&& \text{by~\eqref{eqn: transfer matrix}}
      \nonumber
      \\
      {}&=\sum_{\substack{\sigma:U\to V_{m,n}\setminus V\\\text{bijection}}}\prod_{\vec{u}\in U}\bra{\vec{u}}A_{m,n}\ket{\sigma(\vec{u})}
      {}&& \text{by the definition of the permanent}
      \nonumber
      \\
      {}&=\sum_{\substack{\sigma:U\to V_{m,n}\setminus V\\\text{bijection}}}\prod_{\vec{u}\in U}\bra{g\cdot\vec{u}}A_{m,n}\ket{\sigma_g(g\cdot\vec{u})}
      {}&&\text{by~\eqref{eqn: matrix element under group action} and using $g_\ast\cdot\sigma(\vec{u})=\sigma_g(g\cdot\vec{u})$}
      \nonumber
      \\
      {}&=\sum_{\substack{\sigma':g\cdot U\to V_{m,n}\setminus g_\ast\cdot V\\\text{bijection}}}\prod_{\vec{u}'\in g\cdot U}\bra{\vec{u}'}A_{m,n}\ket{\sigma'(\vec{u}')}
      {}&&\text{since $\sigma\mapsto\sigma_g$ is a bijection}
      \nonumber
      \\
      {}&=\bra{g\cdot U}T_{m,n}\ket{g_\ast\cdot V},
      {}&&
    \end{align}
    as claimed.
  \end{proof}

  It is now straightforward to verify~\eqref{eqn: property C}.
  Let $g\in G_{m,n}$ be such that $U_2=g\cdot U_1$.
  Then,
  \begin{align}
    \sum_{V\in\cO_2}\bra{U_1}T_{m,n}\ket{V}
    {}&=\sum_{V\in\cO_2}\bra{g\cdot U_1}T_{m,n}\ket{g_\ast\cdot V}
    {}&& \text{by the claim}
    \nonumber
    \\
    {}&=\sum_{V\in\cO_2}\bra{g\cdot U_1}T_{m,n}\ket{V}
    {}&& \text{since $g_\ast\cdot\cO_2=\cO_2$}
    \nonumber
    \\
    {}&=\sum_{V\in\cO_2}\bra{U_2}T_{m,n}\ket{V},
    {}&&
  \end{align}
  as required.
\end{proof}

Define the $G_{m,n}$-compressed version of $T_{m,n}$ to be the matrix $\widehat{T}_{m,n}$ whose rows and columns are indexed by $2^{V_{m,n}}/G_{m,n}$, with
\begin{equation}
  \bra{\cO_1}\widehat{T}_{m,n}\ket{\cO_2}
  :=\sum_{V\in\cO_2}\bra{U}T_{m,n}\ket{V},
  \quad\text{for any }U\in\cO_1;
\end{equation}
the choice of $U\in\cO_1$ is irrelevant by Proposition~\ref{prop: property C}.

We now show that the compression preserves the spectral radius.
Our result strengthens both~\cite[Lemma~3.2]{ciucu1998improved} which required the group action to commute with the uncompressed matrix and~\cite[Theorem~11]{lundow2001compression} which required a nonzero spectral gap.

\begin{proposition}
  \label{prop: compression preserves spectral radius}
  For all $m,n\ge 2$,
  \begin{equation}
    \label{eqn: compression preserves spectral radius}
    \rho(T_{m,n})=\rho\big(\widehat{T}_{m,n}\big).
  \end{equation}
\end{proposition}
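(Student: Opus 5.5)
We will realize $\widehat{T}_{m,n}$ as the restriction of $T_{m,n}$ to an invariant subspace, and then use nonnegativity to show that the spectral radius lives on that subspace. Let $N$ be the number of orbits and let $P$ be the $2^{mn}\times N$ characteristic matrix of the orbit partition: $\bra{U}P\ket{\cO}=\indicator{\cO}(U)$. Property C (Proposition~\ref{prop: property C}) says precisely that, for every $U\in\cO_1$,
\begin{equation*}
  \bra{U}T_{m,n}P\ket{\cO_2}=\sum_{V\in\cO_2}\bra{U}T_{m,n}\ket{V}=\bra{\cO_1}\widehat{T}_{m,n}\ket{\cO_2}=\bra{U}P\widehat{T}_{m,n}\ket{\cO_2}.
\end{equation*}
Hence $T_{m,n}P=P\widehat{T}_{m,n}$. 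Since the orbits are disjoint and nonempty, $P$ has full column rank, so its column space $W$ is $T_{m,n}$-invariant and $T_{m,n}|_W$ is represented by $\widehat{T}_{m,n}$ in the basis given by the columns of $P$. Consequently every eigenvalue of $\widehat{T}_{m,n}$ is an eigenvalue of $T_{m,n}$, and $\rho(\widehat{T}_{m,n})\le\rho(T_{m,n})$.

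For the reverse inequality we use Perron--Frobenius for nonnegative matrices (no irreducibility is needed). Both $T_{m,n}$ and $\widehat{T}_{m,n}$ are entrywise nonnegative. The key observation is that $\rho(T)=\lim_k\|T^k\|^{1/k}$ for any norm. We take the norm $\|T^k\mathbf{1}\|_\infty$, which equals the maximum row sum $\|T^k\|_\infty$ because $T^k\ge0$. Here $\mathbf{1}=P\mathbf{1}_N$ lies in $W$, so iterating the intertwining relation gives
\begin{equation*}
  T_{m,n}^k\mathbf{1}=T_{m,n}^kP\mathbf{1}_N=P\widehat{T}_{m,n}^k\mathbf{1}_N.
\end{equation*}
Thus $\|T_{m,n}^k\|_\infty=\|P\widehat{T}_{m,n}^k\mathbf{1}_N\|_\infty=\|\widehat{T}_{m,n}^k\mathbf{1}_N\|_\infty=\|\widehat{T}_{m,n}^k\|_\infty$. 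The middle equality holds because each row of $P$ has exactly one entry equal to $1$, and the last holds by nonnegativity of $\widehat{T}_{m,n}$. Taking $k$-th roots and applying Gelfand's formula to both matrices yields~\eqref{eqn: compression preserves spectral radius}; in fact this step alone gives both inequalities at once.

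The main obstacle is to avoid the spectral-gap hypothesis of Lundow's argument, and the device above handles it. A Perron vector of $T_{m,n}$ need not lie in $W$, but $W$ always contains the all-ones vector. Because the matrix is nonnegative, the growth of $T_{m,n}^k$ applied to the all-ones vector captures the full spectral radius regardless of reducibility or of peripheral eigenvalues. The only points to check are that $\mathbf{1}\in W$ (the orbits cover $2^{V_{m,n}}$) and that $\widehat{T}_{m,n}\ge 0$, both of which are immediate.
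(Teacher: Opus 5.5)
Your proof is correct and follows essentially the same route as the paper: the intertwining $T_{m,n}P=P\widehat{T}_{m,n}$ from property C, then $\|T_{m,n}^k\|_\infty=\|\widehat{T}_{m,n}^k\|_\infty$ obtained by applying powers to the all-ones vector $\mathbf{1}=P\mathbf{1}_N$, then Gelfand's formula. Your opening invariant-subspace argument for $\rho(\widehat{T}_{m,n})\le\rho(T_{m,n})$ is valid but redundant, as you yourself note, and the appeal to Perron--Frobenius is not actually used anywhere.
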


\begin{proof}
  Define the matrix $P_{m,n}$ by 
  \begin{equation}
    \label{eqn: P}
    \bra{U}P_{m,n}\ket{[V]}
    :=\indicator{[V]}(U).
  \end{equation}
  By a straightforward computation, making use of Proposition~\ref{prop: property C}, we have that
  \begin{equation}
    \label{eqn: compression intertwining}
    T_{m,n}P_{m,n}=P_{m,n}\widehat{T}_{m,n}.
  \end{equation}
  Let $\norm{\cdot}_\infty$ denote the maximum row sum norm for square matrices~\cite[Ex.~5.6.5]{horn2012matrix}.
  Then, for all $q\ge 1$,
  \begin{align}
    \norm{T_{m,n}^q}_\infty
    {}&=\max_{U\subseteq V_{m,n}} \bra{U} T_{m,n}^q \bigg(\sum_{V\subseteq V_{m,n}} \ket{V}\bigg)
    {}&&\text{by the definition of $\norm{\cdot}_\infty$}
    \nonumber
    \\
    {}&=\max_{U\subseteq V_{m,n}} \bra{U} T_{m,n}^q P_{m,n} \bigg(\sum_{[V]\in 2^{V_{m,n}}/G_{m,n}} \ket{[V]}\bigg)
    {}&&\text{by~\eqref{eqn: P}}
    \nonumber
    \\
    {}&=\max_{U\subseteq V_{m,n}} \bra{U} P_{m,n} \widehat{T}_{m,n}^q \bigg(\sum_{[V]\in 2^{V_{m,n}}/G_{m,n}} \ket{[V]}\bigg)
    {}&&\text{by repeated use of~\eqref{eqn: compression intertwining}}
    \nonumber
    \\
    {}&=\max_{[U]\in 2^{V_{m,n}}/G_{m,n}} \bra{[U]} \widehat{T}_{m,n}^q \bigg(\sum_{[V]\in 2^{V_{m,n}}/G_{m,n}} \ket{[V]}\bigg)
    {}&&\text{by~\eqref{eqn: P}}
    \nonumber
    \\
    {}&=\norm{\widehat{T}_{m,n}^q}_\infty.
    {}&&
  \end{align}
  By Gelfand's formula~\cite[Corollary~5.6.14]{horn2012matrix}, we conclude that
  \begin{equation}
    \rho(T_{m,n})
    =\lim_{q\to\infty}\norm{T_{m,n}^q}_\infty^{1/q}
    =\lim_{q\to\infty}\norm{\widehat{T}_{m,n}^q}_\infty^{1/q}
    =\rho\big(\widehat{T}_{m,n}\big),
  \end{equation}
  as required.
\end{proof}

Lastly, we compute $K_{m,n}$.
Since every automorphism of $V_{m,n}$ fixes the identity $(0,0)$, there are only three possible choices of $\vec{t}_\varphi$ for $\varphi\in K_{m,n}$: $(0,0)$, $(-1,0)$, and $(0,-1)$.
Within each possibility, we have two additional choices for $\varphi((1,0))$; once that is fixed, $\varphi((0,1))$ is forced, and so is $\varphi$ by linear extension, though the resulting map may not be an automorphism of $V_{m,n}$ unless, for instance, the orders of the generators $(1,0)$ and $(0,1)$ are preserved.
In Table~\ref{tab: varphi}, we record the possible maps $\varphi$ and the condition for each to be an element of $\Aut(V_{m,n})$.
Thus,
\begin{equation}
  K_{m,n} \cong 
  \begin{cases}
    S_3 & \text{if }m=n \\
    C_2 & \text{if }m\ne n\text{ and }(m\mid n\text{ or }n\mid m) \\
    \set{1} & \text{if }m\nmid n\text{ and }n\nmid m
  \end{cases}
  .
\end{equation}

\begin{table}[t]
  \centering
  \begin{tabular}{ccccc}
    \toprule
    $\vec{t}_\varphi$ & $\varphi((1,0))$ & $\varphi((0,1))$ & $\varphi((x,y))$  & Condition for $\varphi\in\Aut(V_{m,n})$ \\ 
    \midrule
    \multirow{2}{*}{$(0,0)$} & $(1,0)$ & $(0,1)$ & $(x,y)$ & always \\
     & $(0,1)$ & $(1,0)$ & $(y,x)$ & $m=n$ \\
    \multirow{2}{*}{$(-1,0)$} & $(-1,0)$ & $(-1,1)$ & $(-x-y,y)$ & $m\mid n$ \\
     & $(-1,1)$ & $(-1,0)$ & $(-x-y,x)$ & $m=n$ \\
    \multirow{2}{*}{$(0,-1)$} & $(1,-1)$ & $(0,-1)$ & $(x,-x-y)$ & $n\mid m$ \\
     & $(0,-1)$ & $(1,-1)$ & $(y,-x-y)$ & $m=n$ \\
    \bottomrule
  \end{tabular}
  \caption{Possible maps $\varphi\in K_{m,n}$ and their admissibility conditions}
  \label{tab: varphi}
\end{table}

\subsection{Matrix-free and parallelizable implementation}
\label{sec: matrix-free implementation}

In this subsection, we show how to approximate $\rho(\widehat{T}_{m,n})$ from above and to arbitrary precision by a monotone sequence evaluated through repeated matrix-vector products, without needing to store the full matrix $\widehat{T}_{m,n}$, while also allowing for parallel computation.

The main ingredient is the following result, which is likely known to empiricists, but we have not found a formal reference.
For completeness, we give a proof in Appendix~\ref{appx: iterated CW} based on the ideas of Friedland--Schneider~\cite{friedland1980growth}, who worked under stronger assumptions and drew a stronger conclusion; specifically, out of the two equalities in~\cite[(6.2)]{friedland1980growth}, the second one alone suffices for our purposes.

\begin{proposition}[Iterated Collatz--Wielandt formula]
  \label{prop: iterated CW}
  Let $A$ be an $n\times n$, nonnegative matrix with positive diagonal and $\vec{x}\in\R^n$ be a positive vector.
  For $q\ge 0$, define
  \begin{equation}
    \label{eqn: definition of C_q}
    C_q:=\max_{1\le i\le n}\frac{(A^{q+1}\vec{x})_i}{(A^q\vec{x})_i}.
  \end{equation}
  Then, $(C_q)_{q\ge 0}$ is non-increasing, and
  \begin{equation}
    \rho(A)=\lim_{q\to\infty}C_q=\inf_{q\ge0}C_q.
  \end{equation}
\end{proposition}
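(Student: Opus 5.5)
The plan has three steps: show that $C_q$ is well defined and non-increasing, show $C_q\ge\rho(A)$ for every $q$, and then show that the limit is exactly $\rho(A)$. The first two steps are elementary. The third is where the positive diagonal and the Friedland--Schneider asymptotics enter.

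\emph{Well-definedness and monotonicity.} Write $A=\Delta+N$ with $\Delta$ the positive diagonal part and $N\ge0$. Then $A^q\ge\Delta^q$ entrywise, so $A^q\vec{x}\ge\Delta^q\vec{x}>0$ for every $q$, and all denominators in \eqref{eqn: definition of C_q} are positive. By definition, $A^{q+1}\vec{x}\le C_q\,A^q\vec{x}$ entrywise. Multiplying on the left by the nonnegative matrix $A$ preserves this inequality and gives $A^{q+2}\vec{x}\le C_q\,A^{q+1}\vec{x}$. Hence $C_{q+1}\le C_q$.

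\emph{Lower bound.} Set $\vec{y}:=A^q\vec{x}>0$, so that $A\vec{y}\le C_q\vec{y}$. Iterating gives $A^k\vec{y}\le C_q^k\vec{y}$. Let $D_{\vec{y}}=\operatorname{diag}(\vec{y})$. Then $\norm{D_{\vec{y}}^{-1}A^kD_{\vec{y}}}_\infty=\max_i (A^k\vec{y})_i/y_i\le C_q^k$. Gelfand's formula applied to the similar matrix $D_{\vec{y}}^{-1}AD_{\vec{y}}$ then yields $\rho(A)\le C_q$. This is the classical Collatz--Wielandt bound~\cite[Theorem~8.1.26]{horn2012matrix}. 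It follows that $\lim_q C_q=\inf_q C_q$ exists and is at least $\rho(A)$.

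\emph{Upper bound on the limit (main obstacle).} It remains to show $\lim_q C_q\le\rho(A)$. A naive telescoping argument fails, because the index attaining the maximum can move with $q$. I would therefore use the entrywise asymptotics of Friedland--Schneider~\cite{friedland1980growth}.

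First, put $A$ in Frobenius normal form, so that it is block upper triangular with irreducible diagonal blocks (classes). Because $A$ has positive diagonal, every class is an irreducible matrix with positive diagonal, hence primitive, so there is no cyclic oscillation.

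For each index $i$, let $r_i$ be the maximum spectral radius among classes accessible from the class of $i$. Let $k_i+1$ be the maximal number of classes of spectral radius $r_i$ lying on a chain of accesses starting from $i$. For a positive vector $\vec{x}$, Friedland--Schneider give $(A^q\vec{x})_i\sim c_i\,q^{k_i}r_i^{\,q}$ with $c_i>0$; this is the second equality of~\cite[(6.2)]{friedland1980growth}, and primitivity lets us take the limit along all $q$ rather than along residue classes.

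Consequently $(A^{q+1}\vec{x})_i/(A^q\vec{x})_i\to r_i$ for each fixed $i$. Since there are finitely many indices, $C_q\to\max_i r_i$. Every class is accessible from its own indices, so $\max_i r_i$ is the maximum spectral radius over all classes, which is $\rho(A)$. This gives the claim.

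The delicate part is checking that the cited asymptotic really holds under our weaker hypotheses (positive diagonal only, arbitrary positive $\vec{x}$). I would verify it by induction on the number of classes. Using the block-triangular structure, write $(A^q\vec{x})_i$ as a sum over access chains of convolutions of powers of the diagonal blocks. Then apply Perron--Frobenius to each primitive block, which gives $B^q\sim r^q\vec{u}\vec{v}^\top$ with $\vec{u},\vec{v}>0$. Finally, sum the resulting polynomial-times-exponential terms, keeping only the dominant $q^{k_i}r_i^q$ term, whose coefficient is positive because $\vec{x}>0$.
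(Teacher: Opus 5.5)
Your proposal is correct and follows essentially the same route as the paper: put $A$ in Frobenius normal form, note that the positive diagonal makes every class primitive, use the Friedland--Schneider asymptotics to get $(A^{q+1}\vec{x})_i/(A^q\vec{x})_i\to r_i$ (your $r_i$ is the paper's $\rho(A_i)$ for the principal submatrix on the indices reachable from $i$), and conclude with $\max_i r_i=\rho(A)$. The differences are cosmetic: the paper restricts to the reachable set $S_i$, normalizes, and applies the entrywise asymptotic \cite[Theorem~5.10]{friedland1980growth} instead of a vector asymptotic, and it cites the Collatz--Wielandt bound where you prove it via a diagonal similarity.
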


Observe that $\bra{[U]}\widehat{T}_{m,n}\ket{[V]}$ vanishes unless $\abs{U}+\abs{V}=mn$.
For $0\le i\le\floor{mn/2}$, let
\begin{equation}
  \cS_i^-:=\set[\big]{[U]\in 2^{V_{m,n}}/G_{m,n}\mid \abs{U}=i},\quad
  \cS_i^+:=\set[\big]{[U]\in 2^{V_{m,n}}/G_{m,n}\mid \abs{U}=mn-i},
\end{equation}
and denote by $E_i^\pm$ the corresponding coordinate subspaces.
Since set complementation commutes with the action of $G_{m,n}$, we have the bijection
\begin{equation}
  \cS_i^+ \to \cS_i^-,\quad
  [U] \mapsto [V_{m,n}\setminus U]
\end{equation}
and hence $\abs{\cS_i^+}=\abs{\cS_i^-}$.
Moreover, defining the restrictions
\begin{equation}
  X_{m,n}^{(i)}:=\eval{\widehat{T}_{m,n}}_{E_i^+}: E_i^+ \to E_i^-,\quad
  Y_{m,n}^{(i)}:=\eval{\widehat{T}_{m,n}}_{E_i^-}: E_i^- \to E_i^+,
\end{equation}
we have the decomposition
\begin{equation}
  \widehat{T}_{m,n}^2 = 
  \begin{cases}
    \bigoplus_{i=0}^{mn/2-1} \big(X_{m,n}^{(i)}Y_{m,n}^{(i)} \oplus Y_{m,n}^{(i)}X_{m,n}^{(i)}\big) \oplus \big(X_{m,n}^{(mn/2)}Y_{m,n}^{(mn/2)}\big) & \text{if $mn$ is even} \\
    \bigoplus_{i=0}^{\floor{mn/2}} \big(X_{m,n}^{(i)}Y_{m,n}^{(i)} \oplus Y_{m,n}^{(i)}X_{m,n}^{(i)}\big) & \text{otherwise}
  \end{cases}
\end{equation}
and so 
\begin{equation}
  \label{eqn: spectral radius decomposition}
  \rho\big(\widehat{T}_{m,n}\big)^2
  =\rho\big(\widehat{T}_{m,n}^2\big)
  =\max_{0\le i\le\floor{mn/2}} \rho\big(X_{m,n}^{(i)}Y_{m,n}^{(i)}\big),
\end{equation}
having used that $\rho\big(X_{m,n}^{(i)}Y_{m,n}^{(i)}\big)=\rho\big(Y_{m,n}^{(i)}X_{m,n}^{(i)}\big)$ for each $i$~\cite[Problem~1.2.P17]{horn2012matrix}.

\begin{corollary}
  \label{cor: CW corollary}
  Let $m,n\ge 2$ be integers.
  For $0\le i\le \floor{mn/2}$, let $\ket{\vec{1}_i}$ denote the all-ones vector in the basis indexed by $\cS_i^-$.
  For $p\ge 0$, define
  \begin{equation}
    \label{eqn: CW upper bound}
    C_{m,n;p}^{(i)}:=\frac{1}{2mn}\log\cbraces[\bigg]{\max_{[U]\in\cS_i^-} \frac{\bra{[U]}\big(X_{m,n}^{(i)}Y_{m,n}^{(i)}\big)^{p+1}\ket{\vec{1}_i}}{\bra{[U]}\big(X_{m,n}^{(i)}Y_{m,n}^{(i)}\big)^{p}\ket{\vec{1}_i}}}
    .
  \end{equation}
  Then, $(C_{m,n;p}^{(i)})_{p\ge 0}$ is non-increasing, and
  \begin{equation}
    \label{eqn: CW corollary}
    \ell_3
    \le \frac{1}{mn}\log\rho\big(\widehat{T}_{m,n}\big)
    =\lim_{p\to\infty} \cbraces[\bigg]{\max_{0\le i\le\floor{mn/2}}C_{m,n;p}^{(i)}}
    =\inf_{p\ge0} \cbraces[\bigg]{\max_{0\le i\le\floor{mn/2}}C_{m,n;p}^{(i)}}.
  \end{equation}
\end{corollary}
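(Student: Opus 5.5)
The plan is to apply Proposition~\ref{prop: iterated CW} separately to each block $A:=X_{m,n}^{(i)}Y_{m,n}^{(i)}$, viewed as a square nonnegative matrix on $E_i^-$, with the positive vector $\vec{x}=\ket{\vec{1}_i}$. The results are then assembled using~\eqref{eqn: spectral radius decomposition}, Proposition~\ref{prop: compression preserves spectral radius} and Theorem~\ref{thm: bound}.

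\textbf{Checking the hypotheses.} Nonnegativity of $A$ is immediate, since every entry of $\widehat{T}_{m,n}$ is a sum of permanents of $0/1$ matrices. The one real point is positivity of the diagonal of $A$. For $[U]\in\cS_i^-$ we expand
\begin{equation*}
  \bra{[U]}A\ket{[U]}=\sum_{[W]\in\cS_i^+}\bra{[U]}\widehat{T}_{m,n}\ket{[W]}\bra{[W]}\widehat{T}_{m,n}\ket{[U]},
\end{equation*}
so it suffices to exhibit one positive term. Take $W:=V_{m,n}\setminus U$, which has $\abs{W}=mn-i$. Since $(0,0)\in D$, the matrix $A_{m,n}$ has all diagonal entries equal to $1$. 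Hence $\bra{U}T_{m,n}\ket{W}=\per A_{m,n}[U,U]\ge 1$, because the identity bijection contributes $1$. Likewise $\bra{W}T_{m,n}\ket{U}=\per A_{m,n}[W,W]\ge 1$. All summands defining compressed entries are nonnegative, so both compressed factors are at least $1$, and the diagonal entry is positive. The same bound covers the empty permanent (value $1$) and the middle index $i=mn/2$ when $mn$ is even.

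\textbf{Monotonicity and limits.} Positivity of the diagonal together with $\vec{x}>0$ makes every $A^p\vec{x}$ positive, so the ratios in~\eqref{eqn: CW upper bound} are well defined. Proposition~\ref{prop: iterated CW} then shows that
\begin{equation*}
  \max_{[U]}\frac{(A^{p+1}\vec{1}_i)_{[U]}}{(A^p\vec{1}_i)_{[U]}}
\end{equation*}
is non-increasing in $p$ with limit $\rho(A)$. Since $t\mapsto\frac{1}{2mn}\log t$ is increasing and continuous, $C^{(i)}_{m,n;p}$ is non-increasing with limit $\frac{1}{2mn}\log\rho(X_{m,n}^{(i)}Y_{m,n}^{(i)})$.

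\textbf{Assembling the bound.} There are finitely many indices $i$. A maximum of finitely many non-increasing sequences is non-increasing, and its limit is the maximum of the limits. Moreover, for a non-increasing sequence the limit equals the infimum. By~\eqref{eqn: spectral radius decomposition}, the limit of the maximum equals
\begin{equation*}
  \frac{1}{2mn}\log\rho\big(\widehat{T}_{m,n}\big)^2=\frac{1}{mn}\log\rho\big(\widehat{T}_{m,n}\big).
\end{equation*}
Finally, this quantity equals $\frac{1}{mn}\log\rho(T_{m,n})$ by Proposition~\ref{prop: compression preserves spectral radius}, and that is at least $\ell_3$ by Theorem~\ref{thm: bound}, which gives~\eqref{eqn: CW corollary}.

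The only step requiring any thought is the positive-diagonal check. Everything else is bookkeeping of monotone limits.
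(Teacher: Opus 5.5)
Your proposal is correct and follows essentially the same route as the paper. The paper also checks nonnegativity and positive diagonal of $X_{m,n}^{(i)}Y_{m,n}^{(i)}$ via the complementary orbit $[V_{m,n}\setminus U]$, then applies Proposition~\ref{prop: iterated CW} blockwise with $\ket{\vec{1}_i}$ and assembles the result using the spectral-radius decomposition, Proposition~\ref{prop: compression preserves spectral radius} and Theorem~\ref{thm: bound}; your identity-bijection argument (using $(0,0)\in D$) simply spells out the paper's terse ``$\ge 1\cdot 1$'' step.
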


\begin{proof}
  By construction, the matrix $X_{m,n}^{(i)}Y_{m,n}^{(i)}$ is nonnegative.
  To see that it has positive diagonal, note that, for all $U\subseteq V_{m,n}$ with $\abs{U}=i$,
  \begin{equation}
    \bra{[U]} X_{m,n}^{(i)}Y_{m,n}^{(i)} \ket{[U]}
    \ge \bra{[U]} X_{m,n}^{(i)} \ket{[V_{m,n}\setminus U]}
    \bra{[V_{m,n}\setminus U]} Y_{m,n}^{(i)} \ket{[U]}
    \ge 1\cdot 1
    =1.
  \end{equation} 
  Now, the inequality in~\eqref{eqn: CW corollary} follows from Theorem~\ref{thm: bound} and Proposition~\ref{prop: compression preserves spectral radius}, and the remaining conclusions follow from~\eqref{eqn: spectral radius decomposition} by applying Proposition~\ref{prop: iterated CW} with $A=X_{m,n}^{(i)}Y_{m,n}^{(i)}$ and $\vec{x}=\ket{\vec{1}_i}$ for each $0\le i\le\floor{mn/2}$.  
\end{proof}

The main advantage of Corollary~\ref{cor: CW corollary} is that the row sums in~\eqref{eqn: CW upper bound} can be computed by \textit{dynamic programming}, using the identity
\begin{equation}
  \begin{multlined}
    \bra{[U]} \big(X_{m,n}^{(i)}Y_{m,n}^{(i)}\big)^{p+1} \ket{\vec{1}_i}
    \\
    =\sum_{[V]\in\cS_i^+}\sum_{[W]\in\cS_i^-} \bra{[U]} X_{m,n}^{(i)} \ket{[V]} \bra{[V]} Y_{m,n}^{(i)} \ket{[W]} \bra{[W]} \big(X_{m,n}^{(i)}Y_{m,n}^{(i)}\big)^{p} \ket{\vec{1}_i},
  \end{multlined}
\end{equation}
for $0\le i\le \floor{mn/2}$ and $p\ge 0$.
In practice, we proceed as follows.
For each $0\le i\le \floor{mn/2}$:
\begin{enumerate}
  \item Maintain two vectors $\ket{\text{even},i}$ and $\ket{\text{odd},i}$, indexed respectively by $\cS_i^-$ and $\cS_i^+$, and with $\ket{\text{even},i}$ set to $\ket{\vec{1}_i}$ initially.
  At each step $p\ge 0$, we first overwrite $\ket{\text{odd},i}=Y_{m,n}^{(i)}\ket{\text{even},i}$, and then overwrite $\ket{\text{even},i}=X_{m,n}^{(i)}\ket{\text{odd},i}$.
  During the second update, each ratio 
  \begin{equation}
    \frac{\bra{[U]}X_{m,n}^{(i)}\ket{\text{odd},i}}{\bra{[U]}\ket{\text{even},i}}
  \end{equation}
  is compared to the \textit{running} maximum and the running maximum updated if necessary, before the entry $\bra{[U]}\ket{\text{even},i}$ is overwritten.
  Once the second update is complete, the running maximum is equal to the maximum in~\eqref{eqn: CW upper bound} and used to compute $C_{m,n;p}^{(i)}$.
  The running maximum is reset to $0$ before the next iteration.

  \item To compute the product $Y_{m,n}^{(i)}\ket{\vec{\text{even}},i}$, we iterate over the orbits $[V]\in \cS_i^+$.
  For each orbit, we fix a representative $V\in[V]$, and iterate over all mappings $\alpha:V\to D$ such that the mapping $\vec{v}\mapsto\vec{v}+\alpha(\vec{v})$ is injective.
  For each such mapping $\alpha$, write $V_\alpha:=\set{\vec{v}+\alpha(\vec{v})\mid\vec{v}\in V}$, and add 
  \begin{equation}
    \bra{[V_{m,n}\setminus V_\alpha]}\ket{\text{even},i}
  \end{equation}
  to the \textit{running} sum.
  Once the iteration over $\alpha$ is complete, the running sum is stored in $\bra{[V]}\ket{\text{odd},i}$.
  The other product $X_{m,n}^{(i)}\ket{\text{odd},i}$ is computed analogously.
\end{enumerate}
Thus, we never need to store any entry of $\widehat{T}_{m,n}$, at the cost of having to re-enumerate the relevant matchings for each matrix-vector multiplication.

If multiple computers are available, the sectors over $0\le i\le\floor{mn/2}$ may be processed in parallel; on a single computer, the decomposition above instead serves as a further dimensional reduction.
This is finer than the parity-based decomposition used by Ciucu~\cite[Section~3]{ciucu1998improved} and Lundow~\cite[Section~4.3]{lundow2001compression}, even after accounting for the latter's one additional split, because our decomposition yields $\floor{mn/2}+1$ sectors, rather than only two or three.

\section{Results}

Our numerical results are summarized in Table~\ref{tab: results}. 
All bounds were obtained from a Java implementation of Corollary~\ref{cor: CW corollary}, using $p=4$ except for $(m,n)=(5,5)$, for which $p=2$ was used to reduce the computation time.
All vector updates used arbitrary-precision integer arithmetic, and ratios were compared by exact cross multiplication. 
The sector decomposition described in Section~\ref{sec: matrix-free implementation} was used solely as an additional dimensional reduction, with the sectors processed sequentially rather than in parallel.
All computations were performed on a 64-bit desktop system equipped with an Intel Core i7-14700F processor (2.10 GHz) and 16 GB of RAM.
The computation for $(m,n)=(5,5)$ required the longest runtime of approximately 16.5 hours.

Theorem~\ref{thm: main} follows from the entry $(m,n)=(5,5)$. 
We also observe the following phenomenon: among pairs $(m,n)$ with comparable products, the bound is generally sharper when $m$ and $n$ are closer to one another; we do not have a theoretical explanation of this behavior.

Finally, we note that the method described in this paper can be generalized without modification to hypercubic lattices in higher dimensions, though we did not pursue this direction since, as Ciucu already noted~\cite[Section~4]{ciucu1998improved}, the community's interest is primarily in the three-dimensional case.

\begin{table}[t]
  \centering
  \begin{tabular}{ccccc}
    \toprule
    $m$ & \multicolumn{4}{c}{$n$} \\
    \cmidrule(lr){2-5}
     & 2 & 3 & 4 & 5 \\
    \midrule
    2 & 0.486478 & - & - & - \\
    3 & 0.471408 & 0.458558 & - & - \\
    4 & 0.465970 & 0.457380 & 0.454673 & - \\
    5 & 0.463401 & 0.454663 & 0.453365 & 0.452130 \\
    6 & 0.462010 & 0.454618 &  &  \\
    \bottomrule
  \end{tabular}
  \caption{Upper bounds on $\ell_3$ obtained from Corollary~\ref{cor: CW corollary}. A dash indicates that an entry is omitted by symmetry under exchanging $m$ and $n$; blank entries were not computed. Every entry is rounded upward to six decimal places.}
  \label{tab: results}
\end{table}

\section*{Acknowledgments} 

The author thanks Minhao Bai for helpful discussions about Ciucu's paper and Prof. Joel Lebowitz for summer support.
The author also thanks P{\'e}ter Csikv{\'a}ri for making us aware of his improved lower bound on $\ell_3$.
The idea to consider diagonal transfer layers and the connection with Csikv{\'a}ri's matching polynomial inequality~\cite{csikvari2017lower} are due to GPT-5.6 Sol Ultra.

\bibliographystyle{plain}
\bibliography{bibliography.bib}

\begin{thebibliography}{10}

\bibitem{barvinok2016combinatorics}
Alexander Barvinok.
\newblock {\em Combinatorics and complexity of partition functions}, volume~30 of {\em Algorithms and Combinatorics}.
\newblock Springer, 2016.

\bibitem{chandgotia2023large}
Nishant Chandgotia, Scott Sheffield, and Catherine Wolfram.
\newblock {Large deviations for the 3D dimer model}.
\newblock {\em arXiv preprint arXiv:2304.08468}, 2023.

\bibitem{chen2019transfer}
Zhibing Chen, Mordecai~J Golin, and Xuerong Yong.
\newblock The transfer matrices and the capacity of the 2-dimensional $(1,\infty)$-runlength limited constraint.
\newblock {\em Discrete Mathematics}, 342(4):975--987, 2019.

\bibitem{ciucu1998improved}
Mihai Ciucu.
\newblock An improved upper bound for the 3-dimensional dimer problem.
\newblock {\em Duke Mathematical Journal}, 94(1):1--11, 1998.

\bibitem{csikvari2016matchings}
P{\'e}ter Csikv{\'a}ri.
\newblock Matchings in vertex-transitive bipartite graphs.
\newblock {\em Israel Journal of Mathematics}, 215(1):99--134, 2016.

\bibitem{csikvari2017lower}
P{\'e}ter Csikv{\'a}ri.
\newblock {Lower matching conjecture, and a new proof of Schrijver's and Gurvits's theorems}.
\newblock {\em Journal of the European Mathematical Society}, 19(6):1811--1844, 2017.

\bibitem{friedland2005theory}
Shmuel Friedland and Uri~N Peled.
\newblock Theory of computation of multidimensional entropy with an application to the monomer--dimer problem.
\newblock {\em Advances in Applied Mathematics}, 34(3):486--522, 2005.

\bibitem{friedland1980growth}
Shmuel Friedland and Hans Schneider.
\newblock The growth of powers of a nonnegative matrix.
\newblock {\em SIAM Journal on Algebraic Discrete Methods}, 1(2):185--200, 1980.

\bibitem{hammersley1966existence}
John~M Hammersley.
\newblock {Existence theorems and Monte Carlo methods for the monomer-dimer problem}.
\newblock {\em Research papers in statistics: Festschrift for J. Neyman}, pages 125--146, 1966.

\bibitem{horn2012matrix}
Roger~A Horn and Charles~R Johnson.
\newblock {\em Matrix Analysis}.
\newblock Cambridge University Press, 2012.

\bibitem{kasteleyn1961statistics}
Pieter~W Kasteleyn.
\newblock {The statistics of dimers on a lattice: I. The number of dimer arrangements on a quadratic lattice}.
\newblock {\em Physica}, 27(12):1209--1225, 1961.

\bibitem{Klivans_Saldanha_2026}
Caroline~J Klivans and Nicolau Saldanha.
\newblock {Domino tilings beyond 2D}.
\newblock In {\em Surveys in Combinatorics 2026}, London Mathematical Society Lecture Note Series, pages 267--292. Cambridge University Press, 2026.

\bibitem{lelarge2015counting}
Marc Lelarge.
\newblock Counting matchings in irregular bipartite graphs and random lifts.
\newblock {\em arXiv preprint arXiv:1507.04739}, 2015.

\bibitem{lundow2001compression}
Per~H{\aa}kan Lundow.
\newblock Compression of transfer matrices.
\newblock {\em Discrete Mathematics}, 231(1-3):321--329, 2001.

\bibitem{lundow2008exact}
Per~H{\aa}kan Lundow and Klas Markstr{\"o}m.
\newblock Exact and approximate compression of transfer matrices for graph homomorphisms.
\newblock {\em LMS Journal of Computation and Mathematics}, 11:1--14, 2008.

\bibitem{minc1983theory}
Henryk Minc.
\newblock Theory of permanents 1978--1981.
\newblock {\em Linear and Multilinear Algebra}, 12(4):227--263, 1983.

\bibitem{nagle1966new}
John~F Nagle.
\newblock New series-expansion method for the dimer problem.
\newblock {\em Physical Review}, 152(1):190--197, 1966.

\bibitem{otsuka2022residual}
Takahiro Otsuka and Yasuhiro Akutsu.
\newblock Residual entropy of the cubic dimer model: The higher order tensor renormalization group approach.
\newblock {\em Journal of the Physical Society of Japan}, 91(5):055003, 2022.

\bibitem{schrijver1998counting}
Alexander Schrijver.
\newblock Counting 1-factors in regular bipartite graphs.
\newblock {\em Journal of Combinatorial Theory, Series B}, 72(1):122--135, 1998.

\bibitem{schrijver1980lower}
Alexander Schrijver and WG~Valiant.
\newblock On lower bounds for permanents.
\newblock {\em Indagationes Mathematicae (Proceedings)}, 83(4):425--427, 1980.

\bibitem{taggi2022uniformly}
Lorenzo Taggi.
\newblock {Uniformly Positive Correlations in the Dimer Model and Macroscopic Interacting Self-Avoiding Walk in $\mathbb{Z}^d$, $d\ge 3$}.
\newblock {\em Communications on Pure and Applied Mathematics}, 75(6):1183--1236, 2022.

\bibitem{temperley1961dimer}
Harold~NV Temperley and Michael~E Fisher.
\newblock Dimer problem in statistical mechanics---an exact result.
\newblock {\em Philosophical Magazine}, 6(68):1061--1063, 1961.

\bibitem{vanderstraeten2018residual}
Laurens Vanderstraeten, Bram Vanhecke, and Frank Verstraete.
\newblock Residual entropies for three-dimensional frustrated spin systems with tensor networks.
\newblock {\em Physical Review E}, 98(4):042145, 2018.

\end{thebibliography}

\appendix

\section{Proof of Proposition~\ref{prop: iterated CW}}
\label{appx: iterated CW}

In this appendix, we prove the iterated Collatz--Wielandt formula stated in Proposition~\ref{prop: iterated CW}. 
Monotonicity is elementary, while convergence to the spectral radius is established through a blockwise analysis in Frobenius normal form using the asymptotic results of Friedland--Schneider~\cite{friedland1980growth}.

Since $A$ has positive diagonal and $\vec{x}>\vec{0}$, we have $(A^q\vec{x})_i\ge a_{ii}^q x_i>0$, so $C_q$ is well-defined.
By definition, $A^{q+1}\vec{x}\le C_q A^q\vec{x}$; multiplying both sides by $A$ gives $A^{q+2}\vec{x}\le C_q A^{q+1}\vec{x}$, and hence $C_{q+1}\le C_q$.
The Collatz--Wielandt formula~\cite[Theorem~8.1.26]{horn2012matrix} yields $\rho(A)\le C_q$ for all $q\ge 0$, so that
\begin{equation}
  \rho(A)\le\liminf_{q\to\infty}C_q.
\end{equation}
It remains to prove that
\begin{equation}
  \label{eqn: hardest part of proof}
  \limsup_{q\to\infty}C_q\le\rho(A),
\end{equation}
which is the hardest part of the proof.

We import some language from Friedland--Schneider~\cite{friedland1980growth}.
Let $A$ be a nonnegative square matrix in Frobenius normal form~\eqref{eqn: Frobenius normal form}, normalized so that $\rho(A)=1$.
For $1\le\alpha\le\nu$, let $I_\alpha$ denote the index set such that $A_{\alpha\alpha}=A[I_\alpha,I_\alpha]$.
The \textbf{period} of $A$ is the smallest positive integer $q$ such that every eigenvalue of $A$ of maximum modulus is a $q$th root of unity.
Construct a digraph $G(A)$ on the vertex set $\set{1,\dots,\nu}$ which contains the edge $(\alpha,\beta)$ if and only if $A_{\alpha\beta}\ne 0$; in particular, $(\alpha,\alpha)\in G(A)$ for each $\alpha$.
A \textbf{path} in $G(A)$ is either a single vertex or a sequence of vertices $(\alpha_0,\dots,\alpha_s)$, $s\ge 1$, such that $1\le\alpha_0<\dots<\alpha_s\le\nu$ and $(\alpha_{t-1},\alpha_t)\in G(A)$ for $1\le t\le s$.
Declare a vertex $\alpha$ to be \textbf{singular} if $\rho(A_{\alpha\alpha})=\rho(A)$.
For $1\le\alpha\le\beta\le\nu$, define $k(\alpha,\beta)$, the \textbf{singular distance} from $\alpha$ to $\beta$, as the maximum number of singular vertices on a path from $\alpha$ to $\beta$ in $G(A)$ \textit{minus one}; if no such path exists, set $k(\alpha,\beta):=-\infty$.
Moreover, a path attaining the maximum number of singular vertices is said to be \textbf{maximal}.
For a maximal path $\pi$, define $q(\pi)$ as the greatest common divisor of the periods of the singular vertices on $\pi$.
Define $q(\alpha,\beta)$, the \textbf{local period} of $(\alpha,\beta)$, as the least common multiple of $q(\pi)$ over all maximal paths $\pi$ from $\alpha$ to $\beta$; if $k(\alpha,\beta)<0$, set $q(\alpha,\beta):=1$.
By extension, if $i\in I_\alpha$ and $j\in I_\beta$, where $1\le\alpha\le\beta\le\nu$, define the singular distance from $i$ to $j$ as $k[i,j]:=k(\alpha,\beta)$ and the local period of $(i,j)$ as $q[i,j]:=q(\alpha,\beta)$.

We are now ready to prove~\eqref{eqn: hardest part of proof}.

First, we put $A$ in Frobenius normal form~\cite[Problem~8.3.P8]{horn2012matrix}.
Let $P$ be a permutation matrix such that $F=P^TAP$ is in Frobenius normal form, and let $\vec{y}=P^T\vec{x}$.
Since $F^q\vec{y}=P^TA^q\vec{x}$, we have 
\begin{equation}
  \max_{1\le i\le n}\frac{(A^{q+1}\vec{x})_i}{(A^q\vec{x})_i}
  =\max_{1\le i\le n}\frac{(F^{q+1}\vec{y})_i}{(F^q\vec{y})_i}.
\end{equation}
Moreover, $F$ has positive diagonal and $\rho(F)=\rho(A)$.
Therefore, we may replace $(A,\vec{x})$ by $(F,\vec{y})$ and assume without loss of generality that $A$ is in Frobenius normal form:
\begin{equation}
  \label{eqn: Frobenius normal form}
  A=
  \begin{bmatrix}
    A_{11} & A_{12} & \cdots & A_{1\nu} \\
    0 & A_{22} & \cdots & A_{2\nu} \\
    \vdots & \vdots & \ddots & \vdots \\
    0 & 0 & \cdots & A_{\nu\nu}
  \end{bmatrix}
  ,
\end{equation}
where each $A_{\alpha\alpha}$ is irreducible~\cite[Definition~6.2.22]{horn2012matrix}; in fact, since $A$ has positive diagonal, each $A_{\alpha\alpha}$ is primitive by~\cite[Lemma~8.5.4]{horn2012matrix}.
Hence, by the Perron--Frobenius theorem~\cite[Theorem~8.4.4]{horn2012matrix}, each $A_{\alpha\alpha}$ has $\rho(A_{\alpha\alpha})>0$ as its unique eigenvalue of maximum modulus.

For $1\le i\le n$, define the index set
\begin{equation}
  S_i:=\set[\big]{1\le j\le n\mid\text{there exists }m\ge 1\text{ such that }(A^m)_{ij}>0}
\end{equation}
and let $A_i:=A[S_i,S_i]$.

\begin{lemma}
  \label{lem: restriction to S_i}
  Let $\vec{x}^{(i)}:=\vec{x}[S_i]$. 
  For all $q\ge 0$, $(A^q\vec{x})_i=(A_i^q\vec{x}^{(i)})_i$.
\end{lemma}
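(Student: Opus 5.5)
The plan is to expand $(A^q\vec{x})_i$ as a sum over walks and check that every walk with a nonzero contribution stays inside $S_i$. Write
\begin{equation*}
  (A^q\vec{x})_i=\sum_{j_1,\dots,j_q} a_{ij_1}a_{j_1j_2}\cdots a_{j_{q-1}j_q}x_{j_q},
\end{equation*}
with the convention that for $q=0$ both sides equal $x_i$. We need $i\in S_i$ for this convention to be consistent. This holds because $a_{ii}>0$ by the positive-diagonal hypothesis, so $(A^1)_{ii}>0$.

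Next, fix a sequence $(j_1,\dots,j_q)$ whose product $a_{ij_1}\cdots a_{j_{q-1}j_q}$ is nonzero. By nonnegativity, for each $t$ we have
\begin{equation*}
  (A^t)_{ij_t}\ge a_{ij_1}\cdots a_{j_{t-1}j_t}>0,
\end{equation*}
so every $j_t$ lies in $S_i$. Hence the sum over all sequences equals the sum over sequences with all $j_t\in S_i$. Since $i\in S_i$ and all entries are taken from $A[S_i,S_i]$, that restricted sum is exactly $(A_i^q\vec{x}^{(i)})_i$. Here the row and column of $A_i$ and the entry of $\vec{x}^{(i)}$ are indexed by the original labels in $S_i$.

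An equivalent inductive route is also available. One shows by induction on $q$ that $(A^q\vec{x})_j=(A_i^q\vec{x}^{(i)})_j$ for all $j\in S_i$, using that $S_i$ is forward-closed: if $j\in S_i$ and $a_{jk}>0$, then $k\in S_i$, because $(A^{m+1})_{ik}\ge (A^m)_{ij}a_{jk}>0$. The inductive step is then
\begin{equation*}
  (A^{q+1}\vec{x})_j=\sum_k a_{jk}(A^q\vec{x})_k=\sum_{k\in S_i}a_{jk}(A_i^q\vec{x}^{(i)})_k .
\end{equation*}
Specializing to $j=i$ gives the lemma.

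There is no real obstacle. The only points needing care are the membership $i\in S_i$, which is where the positive diagonal is used, and keeping the indexing of the principal submatrix consistent.
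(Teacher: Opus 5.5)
Your proposal is correct and is essentially the paper's argument. The paper reduces the lemma to the forward-closedness $A[S_i,S_i^c]=0$, proved by exactly the inequality $(A^{m+1})_{ik}\ge(A^m)_{ij}a_{jk}>0$ that drives your inductive route; your walk expansion is the unrolled version of the same idea. Your explicit check that $i\in S_i$ follows from the positive diagonal is a point the paper uses only implicitly.
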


\begin{proof}
  It suffices to show that $A[S_i,S_i^c]=0$.
  Suppose that $j\in S_i$ and $a_{jk}>0$.
  Choosing $m\ge 1$ such that $(A^m)_{ij}>0$, we have that $(A^{m+1})_{ik}\ge (A^m)_{ij} a_{jk}>0$, so $k\in S_i$.
\end{proof}

Furthermore, $A_i$ consists of blocks from the Frobenius normal form~\eqref{eqn: Frobenius normal form}:

\begin{lemma}
  \label{lem: S_i contains I_alpha}
  If $I_\alpha\cap S_i\ne\emptyset$, then $I_\alpha\subseteq S_i$.
\end{lemma}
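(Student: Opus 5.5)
The plan is to show that membership in $S_i$ propagates within a single strongly connected block. Suppose $j\in I_\alpha\cap S_i$ and let $k\in I_\alpha$ be arbitrary. Choose $m\ge 1$ with $(A^m)_{ij}>0$, as in the definition of $S_i$. I would first show that $(A^{m'})_{jk}>0$ for some $m'\ge 0$; then $(A^{m+m'})_{ik}\ge (A^m)_{ij}(A^{m'})_{jk}>0$ with $m+m'\ge 1$, so $k\in S_i$. The bound is the same entrywise-positivity step used in the proof of Lemma~\ref{lem: restriction to S_i}.

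The required $m'$ comes from the irreducibility of the diagonal block $A_{\alpha\alpha}=A[I_\alpha,I_\alpha]$ in the Frobenius normal form~\eqref{eqn: Frobenius normal form}.
\begin{itemize}
  \item Irreducibility means the directed graph of $A_{\alpha\alpha}$ is strongly connected. Hence for $j,k\in I_\alpha$ there is a positive entry $(A_{\alpha\alpha}^{m'})_{jk}>0$ for some $m'\ge 0$, with $m'=0$ covering the case $j=k$.
  \item Since $A$ is nonnegative, $(A^{m'})_{jk}\ge (A_{\alpha\alpha}^{m'})_{jk}$. This holds because the sum defining $(A^{m'})_{jk}$ over intermediate indices contains, as a sub-sum, the terms whose intermediate indices all lie in $I_\alpha$.
\end{itemize}

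One subtlety is the degenerate $1\times 1$ block, where irreducibility might be read as allowing a zero entry. Here the positive diagonal of $A$ settles it: every block is primitive, as already noted, and for the case $j=k$ we may simply take $m'=0$, or $m'=1$ since $a_{jj}>0$.

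I expect no real obstacle. The only care needed is to fix the convention that $m'$ may be $0$, or alternatively to use $a_{jj}>0$ to keep all exponents positive.
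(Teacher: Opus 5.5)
Your proof is correct and follows essentially the same route as the paper's: you chain a positive entry $(A^m)_{ij}$, which reaches the block $I_\alpha$, with a positive entry of a power of $A$ inside the block, and conclude by $(A^{m+m'})_{ik}\ge (A^m)_{ij}(A^{m'})_{jk}>0$. The only difference is that the paper uses primitivity of $A_{\alpha\alpha}$ to get a single exponent $m_2\ge 1$ with $A_{\alpha\alpha}^{m_2}>0$, whereas you use irreducibility (strong connectivity) with a pair-dependent exponent $m'\ge 0$, which requires slightly less.
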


\begin{proof}
  Let $k\in I_\alpha\cap S_i$ and $m_1\ge 1$ be such that $(A^{m_1})_{ik}>0$.
  Suppose that $j\in I_\alpha$.
  Since $A_{\alpha\alpha}$ is primitive, there exists $m_2\ge 1$ such that $A_{\alpha\alpha}^{m_2}>0$~\cite[Theorem~8.5.2]{horn2012matrix}, so, in particular, $(A^{m_2})_{kj}>0$.
  Then, $(A^{m_1+m_2})_{ij} \ge (A^{m_1})_{ik} (A^{m_2})_{kj} > 0$, so $j\in S_i$.
\end{proof}

We also have control on the spectral radius of $A_i$:

\begin{lemma}
  \label{lem: spectral radius of A_i}
  For $1\le i\le n$, $0<\rho(A_i)\le\rho(A)$.
  In particular, $\max_{1\le i\le n}\rho(A_i)=\rho(A)$.
\end{lemma}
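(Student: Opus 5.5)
The plan is to prove the three assertions in order: positivity of $\rho(A_i)$, the upper bound $\rho(A_i)\le\rho(A)$, and finally the equality of the maximum with $\rho(A)$.

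For positivity, since $A$ has positive diagonal, $a_{ii}>0$ gives $(A^1)_{ii}>0$, so $i\in S_i$. Hence $A_i$ is a nonempty principal submatrix of $A$ with positive diagonal. A nonnegative matrix with positive diagonal dominates entrywise a positive diagonal matrix $\delta I$, where $\delta=\min_j a_{jj}>0$. Monotonicity of the spectral radius for nonnegative matrices \cite[Theorem~8.1.18]{horn2012matrix} then gives $\rho(A_i)\ge\delta>0$. Alternatively, one can argue via Gelfand's formula: $(A_i^q)_{ii}\ge a_{ii}^q$, so every power has a diagonal entry bounded below, and hence $\rho(A_i)\ge a_{ii}$.

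For the upper bound, $A_i$ is a principal submatrix of the nonnegative matrix $A$. Padding $A_i$ by zeros to an $n\times n$ matrix gives a nonnegative matrix entrywise dominated by $A$, with the same nonzero spectrum as $A_i$. The same monotonicity result, or \cite[Corollary~8.1.20]{horn2012matrix} on principal submatrices, then yields $\rho(A_i)\le\rho(A)$.

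For the ``in particular'', I will use the block structure. By Lemma~\ref{lem: S_i contains I_alpha}, each $S_i$ is a union of the index sets $I_\alpha$. Because $A$ is block upper triangular in Frobenius normal form, its eigenvalues are those of the diagonal blocks, so $\rho(A)=\rho(A_{\alpha\alpha})$ for some $\alpha$. Pick any $i\in I_\alpha$. Primitivity of $A_{\alpha\alpha}$ (as used in the proof of Lemma~\ref{lem: S_i contains I_alpha}) gives $I_\alpha\subseteq S_i$; simpler still, $i\in S_i$ together with Lemma~\ref{lem: S_i contains I_alpha} suffices. Thus $A_{\alpha\alpha}$ is a principal submatrix of $A_i$, and applying the submatrix monotonicity once more gives $\rho(A_i)\ge\rho(A_{\alpha\alpha})=\rho(A)$. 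Combined with the upper bound, this yields $\max_i\rho(A_i)=\rho(A)$.

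There is no real obstacle here. The only point needing care is invoking the correct monotonicity statement for principal submatrices of nonnegative matrices, and checking that $i\in S_i$ follows from the positive diagonal.
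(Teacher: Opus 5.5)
Your proposal is correct and follows the paper's proof essentially step for step. Positivity comes from the positive diagonal, and the upper bound from monotonicity of the spectral radius over principal submatrices \cite[Corollary~8.1.20]{horn2012matrix}. The equality comes from choosing a block with $\rho(A_{\alpha\alpha})=\rho(A)$ and any $i\in I_\alpha$, so that $I_\alpha\subseteq S_i$ by Lemma~\ref{lem: S_i contains I_alpha}. You also make explicit two points the paper leaves implicit: $i\in S_i$, so that the lemma applies, and such a block exists because the spectrum of the block upper triangular $A$ is the union of the spectra of its diagonal blocks.
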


\begin{proof}
  Since $A$ has positive diagonal, so does $A_i$, and hence $\rho(A_i)>0$ by~\cite[Corollary~8.1.20(c)]{horn2012matrix}.
  Since $A_i$ is a principal submatrix of $A$, it follows from~\cite[Corollary 8.1.20(a)]{horn2012matrix} that $\rho(A_i)\le\rho(A)$.
  Lastly, suppose that $1\le\alpha\le\nu$ is such that $\rho(A_{\alpha\alpha})=\rho(A)$.
  Then, any $1\le i\le n$ such that $i\in I_\alpha$ satisfies $\rho(A_i)\ge\rho(A_{\alpha\alpha})=\rho(A)$ by Lemma~\ref{lem: S_i contains I_alpha} and~\cite[Corollary 8.1.20(a)]{horn2012matrix}.
\end{proof}

By Lemma~\ref{lem: S_i contains I_alpha}, $A_i$ is already in Frobenius normal form, and its irreducible diagonal blocks are among the blocks $A_{\alpha\alpha}$ in~\eqref{eqn: Frobenius normal form}.
In particular, they are all primitive, so every singular block of $B_i:=\rho(A_i)^{-1}A_i$, of which there is at least one, has period one.
For $j\in S_i$, let $k_i[j]$ denote the singular distance from $i$ to $j$ for $B_i$, $q_i[j]$ the local period of $(i,j)$ in $B_i$, and $d_i:=\max_{j\in S_i}k_i[j]$.
For $j\in S_i$, we have that $k_i[j]>-\infty$ by the definition of $S_i$, and $q_i[j]=1$.
Moreover, by taking $j\in S_i\cap I_\alpha$ where $1\le\alpha\le\nu$ is such that $\rho(A_i)^{-1}A_{\alpha\alpha}$ is a singular block of $B_i$, we see that $d_i\ge 0$. 
By~\cite[Theorem~5.10]{friedland1980growth}, if $k_i[j]\ge 0$, then there exists a constant $c_{ij}>0$ such that
\begin{equation}
  \lim_{r\to\infty} \frac{(B_i^r)_{ij}}{q^{k_i[j]}}
  =c_{ij};
\end{equation}
and if $k_i[j]=-1$, then there exists $0<\rho_{ij}<1$ such that
\begin{equation}
  \lim_{r\to\infty} \frac{(B_i^r)_{ij}}{\rho_{ij}^{r}}
  =0.
\end{equation}
Thus,
\begin{equation}
  \lim_{r\to\infty} \frac{(B_i^r\vec{x}^{(i)})_i}{r^{d_i}}
  =\lim_{r\to\infty} \sum_{j\in S_i} \frac{(B_i^r)_{ij}x_j}{r^{k_i[j]} r^{d_i-k_i[j]}}
  =\sum_{\substack{j\in S_i\\k_i[j]=d_i}} c_{ij}x_j
  >0,
\end{equation}
and so
\begin{equation}
  \label{eqn: consecutive ratio limit}
  \lim_{r\to\infty} \frac{(A_i^{r+1}\vec{x}^{(i)})_i}{\rho(A_i) (A_i^{r}\vec{x}^{(i)})_i}
  =\lim_{r\to\infty} \frac{(B_i^{r+1}\vec{x}^{(i)})_i}{(B_i^{r}\vec{x}^{(i)})_i}
  =\lim_{r\to\infty} \frac{(B_i^{r+1}\vec{x}^{(i)})_i r^{d_i}}{(B_i^{r}\vec{x}^{(i)})_i (r+1)^{d_i}}
  =1.
\end{equation}
We conclude that
\begin{align}
  \limsup_{r\to\infty} C_r
  {}& \le\lim_{r\to\infty} \max_{1\le i\le n}\frac{(A^{r+1}\vec{x})_i}{(A^r\vec{x})_i}
  {}&&\text{by~\eqref{eqn: definition of C_q}}
  \nonumber
  \\
  {}& =\max_{1\le i\le n}\lim_{r\to\infty}\frac{(A^{r+1}_i\vec{x}^{(i)})_i}{(A^r_i\vec{x}^{(i)})_i}
  {}&&\text{by Lemma~\ref{lem: restriction to S_i}}
  \nonumber
  \\
  {}& =\max_{1\le i\le n}\rho(A_i)
  {}&&\text{by~\eqref{eqn: consecutive ratio limit}}
  \nonumber
  \\
  {}& =\rho(A)
  {}&&\text{by Lemma~\ref{lem: spectral radius of A_i}},
\end{align}
as desired.

\end{document}